\theoremstyle{plain}
\newtheorem{theorem}{Theorem}
\newtheorem{corollary}{Corollary}
\newtheorem{proposition}{Proposition} 
\theoremstyle{definition}
\newtheorem{definition}{Definition}
\theoremstyle{remark}
\numberwithin{equation}{section}
\newcommand\R{\mathbb{R}}
\newcommand\RP{\mathbb{RP}}
\newcommand\gl{{\rm gl}}
\def\gl{\mathfrak{gl}}
\def\r{{\bf r}}
\def\c{{\bf c}}
\def\h{{\bf h}}
\def\kb{{\bf k}}
\renewcommand{\k}[1]{\kappa_{#1}}
\def\SL{\mathrm{SL}}
\def\GL{\mathrm{GL}}
\def\O{\mathrm{O}}
\def\SO{\mathrm{SO}}
\def\F{\mathcal{F}}
\def\D{\mathcal{D}}
\def\P{\mathcal{P}}
\newcommand\p{{\mathfrak p}}
\newcommand\g{{\mathfrak g}}
\newcommand\n{{\mathfrak n}}
\newcommand\so{{\mathfrak so}}
\newcommand\K{{\mathcal K}}
\newcommand\Lo{{\mathcal L}}
\newcommand\Hop{{\mathcal H}}
\newcommand\0{\mathbf{0}}
\newcommand\Fop{{\mathcal F}}
\newcommand\var[1]{\frac{\delta #1}{\delta L}}
\newcommand\rr{{\bf r}}
\newcommand\norm[1]{|\!|#1|\!|}
\begin{document}

\title[Poisson brackets on Grassmannians]
      {Geometric Poisson brackets on Grassmannians and conformal spheres}
\author{G. Mar\'\i~Beffa}
\address{Mathematics Department\\
         University of Wisconsin\\
         Madison, Wisconsin 53706} 
\email{maribeff@math.wisc.edu}
\author{M. Eastwood}
\thanks{{M. E. supported by the Australian Research Council}}
\address{Centre for Mathematics and its Applications\\ Mathematical Sciences Institute\\
Australian National University\\ Canberra, ACT 0200, Australia} 
\email{Michael.Eastwood@anu.edu.au}

\keywords{Invariant evolutions of curves, flat homogeneous spaces,
Poisson brackets, differential invariants, projective invariants,
    completely integrable PDEs, moving frames} 
\subjclass{Primary: 37K; Secondary: 53A55}
\date{March, 2009}
\maketitle

\begin{abstract} In this paper we relate the geometric Poisson brackets on the $2$-Grassmannian in $\R^4$ and on the $(2,2)$ M\"obius sphere. We show that, when written in terms of local moving frames, the geometric Poisson bracket on the M\"obius sphere does not restrict to the space of differential invariants of Schwarzian type. But when the concept of conformal natural frame is transported from the conformal sphere into the Grassmannian, and the Poisson bracket is written in terms of the Grassmannian natural frame, it restricts and results into either a decoupled system or a complexly coupled system of KdV equations, depending on the character of the invariants. We also show that the biHamiltonian Grassmannian geometric brackets are equivalent to the non-commutative KdV biHamiltonian structure. Both integrable systems and Hamiltonian structure can be brought back to the conformal sphere.
\end{abstract}
\section{Introduction} Given a flat homogeneous space, one can define a Hamiltonian structure
on the space of differential invariants (curvatures) of parametrized curves (\cite{M1}).  These structures are often linked to completely integrable PDEs and to their geometric realizations (invariant curve flows on the homogeneous space inducing the integrable system on its invariants). There has recently been a flurry of literature studying the existence of these integrable systems and their associated geometric flows, for example see \cite{Anco}, \cite{Doliwa}, \cite{Qu1}, \cite{Qu2}, \cite{LP}, \cite{SW}, \cite{TT}, \cite{Terng} and references within.

We will say a differential invariant $I$ is of Schwarzian type whenever $\phi^\ast I = ((\phi')^2I)\circ \phi^{-1} + S(\phi)\circ\phi^{-1}$, where $\phi^\ast I$ represents the the pull-back of $I$ by a diffeomorphism $\phi$ and where $S(\phi)$ indicates the Schwarzian derivative of $\phi$ (the Schwarzian derivative itself behaves this way under reparametrization). In \cite{M3} the author conjectured that the nature of the geometry (and its invariants) was linked to  the type of integrable systems they could realize. In particular, she conjectured that the existence of differential invariants of projective (or Schwarzian) type would result in geometric realizations of equations of KdV type (as it appeared in  \cite{M3} and \cite{M4}, for example), while the existence of invariants of Riemannian type would result in geometric realizations of nonlinear Schr\"odinger equations (NLS), modified KdV (mKdV) and Sine Gordon (as in \cite{Anco}, \cite{SW} and \cite{Terng}, for example). In particular, one can obtain a geometric realization of the KdV equation by flows in $\RP^1$, of generalized KdV equations by flows on $\RP^n$, of a system of complexly coupled KdV equations by conformal flows and of a decoupled system of KdV equations by a flow in the Lagrangian Grassmannian. Eastwood conjectured that this dichotomy might be related to the existence of preferred parametrizations, projective versus affine, as in \cite{ES}. Many of these geometric realizations are given by flows for which all non-Schwarzian invariants vanish or 
 are constant (i.e. initial curves are restricted). That is, they are in fact completely integrable level sets of associated curve flows which are not completely integrable themselves. More interestingly, the existence of these integrable level sets  is always linked to the reduction of a bi-Hamiltonian Poisson structure to the submanifold of vanishing non-Schwarzian invariants (as in \cite{M3} and \cite{M4}). 

Most of the above examples are particular cases of flat parabolic geometries, that is, homogenous spaces of the form $G/P$, $G$ semisimple and $P$ a parabolic subgroup. (In fact, they are instances of parabolic geometries associated to $|1|$-gradings of the algebra.) One such case does not seem to
behave the way the other cases do, namely the spinor case $G = O(2n,2n)$. The author showed in \cite{M3} that, even though spinor curves do possess differential invariants of Schwarzian type, the geometric Poisson structure associated to flows of spinors does not reduce to the submanifold of vanishing non-Schwarzian invariants. Furthermore, the somehow expected flow possessing a decoupled system of KdV equations as level set does not preserve this submanifold so that one cannot find a geometric realization of a system of decoupled KdV by flows of spinors.

In this paper we study the flat Grassmannian case of $2$ dimensional planes in $\R^4$. This homogeneous space can be identified with the homogeneous manifold $\SL(2+2,\R)/P$, for a properly chosen parabolic subgroup $P$. The notation $\SL(2+2)$ refers to the action of $\SL(4)$ on the manifold, as shown in Section 3. Since the manifold is flat, the Cartan connection of the manifold will be given by the Maurer-Cartan form (our results are local). As we will see, the group $\SL(2+2, \R)$ is a double cover if $\O(3,3)$ and the cover induces an equivalency of parabolic geometries. Indeed, the oriented conformal sphere $\SO(3,3)/P$ can also be viewed as the spin manifold $\mathrm{Spin}(3,3)/P$, itself isomorphic to $\SL(2+2)/P$. At the infinitesimal level the isomorphism is given by an isomorphism of the Lie algebras and their associated gradations.  Our original intention was to translate our knowledge of the conformal case into the Grassmannian. As it turned out, we also ended up learning more about the conformal case from the Grassmannian situation.

 Moving frames and differential invariants for curves in Grassmannian manifolds $\mathrm{Gr}(p,q) = \SL(p+q)/P$ where $P$ is a suitable chosen parabolic subgroup, are not well-known in general. For the case Gr$(nr,r)$, a special type of non-local invariants were found in \cite{Se}. These invariants correspond to a Laguerre-Forsyth canonical form for the Serret-Frenet equations, and we will use them at the end of the paper. In section 3 we will find a local moving frame along curves in Gr$(2,2)$ and we will find the differential invariants they generate. We will show that two of the four generating invariants are invariants of Schwarzian type. In Theorem \ref{levelset} we will find explicitly the most general form of an invariant Grassmannian flow and we will show that even those who have normalized coefficients do not preserve the submanifold of vanishing non-Schwarzian invariants; when these invariants vanish, the invariant flow blows up. This will imply that the geometric Poisson bracket does not restrict to the space where the non-Schwarzian invariants vanish, much like the situation in the spinor case \cite{M3}. This seems to be somehow counterintuitive, 
 integrable level sets do exist in the conformal sphere of signature $(n,0)$ for which we
 can find a complexly coupled system of KdV equations. 
 
We show that the Grasmannian problem lies in the choice of moving frame. For this we notice that a {\it local} (i.e. depending on the curves and its derivatives) choice of moving frame in the conformal sphere results in the same type of problem the Grassmannian case had. On the other hand, if we choose a {\it natural moving frame}, a generalization of the non-local natural Euclidean frame, then the level set is preserved and both Hamiltonian structures can be reduced. In section 5 we define natural frames for both the conformal sphere of signature $(2,2)$ and for Grassmannian curves. We then prove that we can find a Grassmannian geometric realization inducing a complexly coupled system of KdV equations on the Grassmannian curvatures of projective type. Furthermore, we show that there is also a geometric realization of a decoupled system of two KdV equations.

Finally, we show that, when written in terms of the moving frame generating the non-local invariants appearing in \cite{Se}, the biHamiltonian geometric structure {\it on the complete Grassmannian} is equal to the non-commutative KdV biHamiltonian structure. We also show that the noncommutative KdV equation has a Grassmannian geometric realization. The noncommutative KdV equation and its biHamiltonian structures were defined in \cite{OS}. Given the relation to the conformal sphere, these produce also conformal biHamiltonian structures and a geometric realization for this system. The only conformal realizations that were previously known were those of the coupled KdV system. Using the isomorphism, we also prove that the Poisson brackets on the conformal $(2,2)$ sphere are the noncommutative KdV structures.

\section{ Grassmannian-Conformal parabolic equivalence} 
\subsection{Description of the manifolds}  Let us first realize $\mathrm{Gr}_2(\R^4)$ as the homogeneous space $\SL(4,\R)/P_G$ where $P_G$ is the parabolic subgroup of $\SL(4,\R)$ defined by matrices of the form
\[
\begin{pmatrix} A & \mathbf 0 \\  C & B\end{pmatrix}
\]
where $A, B, C, \mathbf 0 \in M_{2\times2}$ and where $\det A \det B = 1$. The subindex $G$ in $P_G$ indicates its association to the Grassmannian. Its Lie algebra $\p_G$ is defined by similarly shaped matrices with vanishing trace. This quotient corresponds to a gradation of the algebra $\g = \g_1\oplus\g_0\oplus\g_{-1}$, where $\p_G = \g_1\oplus\g_0$, and $\g_{-1}$ is defined by the upper right block (the dual to $\g_1$).

We now describe the conformal sphere with signature $(2,2)$. Let $J\in M_{6\times 6}$ be the matrix
\begin{equation}\label{J}
J = \begin{pmatrix} 0&0&0&0&0&1\\ 0&0&0&0&1&0\\0&0&0&1&0&0\\0&0&1&0&0&0\\0&1&0&0&0&0\\1&0&0&0&0&0&\end{pmatrix}.
\end{equation}
We can realize the group $\SO(3,3)$ as the identity component of the group $\O(3,3)$ defined as \[\O(3,3) = \{ A \in \GL(6,\R), \text{such that}\, A^t J A = J\}.\] With this realization the Lie algebra will be given by matrices which are skew symmetric with respect to the secondary diagonal, that is, $X \in \so(3,3)$ whenever $X^tJ + JX=0$
\begin{equation}\label{so33}
X = \begin{pmatrix} x_{11}&x_{12}&x_{13}&x_{14}&x_{15}&0\\ x_{21}&x_{22}&x_{23}&x_{24}&0&-x_{15}\\ x_{31}&x_{32}&x_{33}&0&-x_{24}&-x_{14}\\ x_{41}&x_{42}&0&-x_{33}&-x_{23}&-x_{13}&\\ x_{51}&0&-x_{42}&-x_{32}&-x_{22}&-x_{12}\\ 0&-x_{51}&-x_{41}&-x_{31}&-x_{21}&-x_{11}\end{pmatrix}.
\end{equation}
Next, define $P_C$ (the subindex $C$ indicates its association with the conformal sphere) to be the parabolic subgroup of $\SO(3,3)$ given by the stabilizer of the line 
\[
\begin{pmatrix} 0\\0\\0\\0\\0\\\ast\end{pmatrix}\in \R^6
\]
under the linear action of $\SO(3,3)$; that is, the stabilizer of the basepoint in $\RP^5$ under the projective action. Its orbit is the non-singular quadric
\[
Q \equiv \{[v]\in \RP_5, \text{such that}\, v^t J v = 0\}= SO(3,3)/P_C.  
\]
The parabolic Lie algebra $\p_C$ is given by those elements in $\so(3,3)$, as in (\ref{so33}), for which $x_{1i} = 0$, $i=2,3,4,5$. As before, the quotient is related to a gradation of the algebra $\g=\g_1\oplus\g_0\oplus\g_{-1}$ with $\g_{-1}$ dual to $\g_1$ and $\p_C = \g_1\oplus\g_0$.
\subsection{Isomorphism between the homogeneous manifolds} It is well-known that there exists an isomorphism of homogeneous spaces
\[
\mathrm{Gr}_2(\R^4) \cong Q.
\]
The isomorphism is induced by a homomorphism at the Lie group level. Specifically, for $A \in \SL(4,\R)$, define $\Phi(A) \in \mathrm{Hom}(\Lambda^2 \R^4, \Lambda^2\R^4)$ by the usual induced action on simple vectors $v\wedge w$. That is
\[
\Phi(A) (v\wedge w) = Av\wedge Aw.
\]
We can identify $\Lambda^2 \R^4$ with $\R^6$ through the choice of basis
\[
\{e_1\wedge e_2, e_1\wedge e_3, e_1\wedge e_4, e_2\wedge e_3, e_4\wedge e_2, e_3\wedge e_4\}.
\]
Under this representation, $\Phi(A)$ is defined by an $\SO(3,3)$ matrix. Through straightforward calculations we see that, if $A, D\in M_{2\times2}$, $B, C\in M_{2\times2}$ with $\det B\det C = 1$, then
\begin{equation}\label{equiv1}
\Phi(\begin{pmatrix}I&A\\ \0&I\end{pmatrix}) = \begin{pmatrix}1&a_{21}&a_{22}&-a_{11}&a_{12}&\det A\\
0&1&0&0&0&-a_{12}\\0&0&1&0&0&a_{11}\\0&0&0&1&0&-a_{22}\\0&0&0&0&1&-a_{21}\\0&0&0&0&0&1\end{pmatrix}, 
\end{equation}
\begin{equation}\label{equiv12}
\Phi(\begin{pmatrix}I&\0\\D&I\end{pmatrix} = \begin{pmatrix}1&0&0&0&0&0\\ d_{12}&1&0&0&0&0\\
d_{22}&0&1&0&0&0\\ -d_{11}&0&0&1&0&0\\ d_{21}&0&0&0&1&0\\\det D&-d_{21}&d_{11}&-d_{22}&-d_{12}&1\end{pmatrix}
\end{equation}
\begin{equation}\label{equiv2}
\Phi(\begin{pmatrix}B&\0\\ \0& C\end{pmatrix}) = \begin{pmatrix}\det B&0&0&0&0&0\\
0&b_{11}c_{11}&b_{11}c_{12}&b_{12}c_{11}&-b_{12}c_{12}&0\\ 0&b_{11}c_{21}&b_{11}c_{22}&b_{12}c_{21}&-b_{12}c_{22}&0\\ 0&b_{21}c_{11}&b_{21}c_{12}&b_{22}c_{11}&-b_{22}c_{12}&0\\ 0&-b_{21}c_{21}&-b_{21}c_{22}&-b_{22}c_{21}&b_{22}c_{22}&0\\ 0&0&0&0&0&\det C\end{pmatrix}.
\end{equation}
The map $\Phi$ is a double cover of $\SO(3,3)$ by $\SL(4, \R)$, which also maps $P_G$ into $P_C$. Notice that $\Phi$ is a double cover on the parabolic subgroups, while it is one-to-one between the sections of $\SL(4,\R)/P_G$ and $\SO(3,3)/P_C$ defined by (\ref{equiv1}). Therefore, the map induces the desired isomorphism between homogeneous spaces.
Clearly $\Phi$ induces a graded map at the Lie algebra level.

\section{The local geometry of Grassmannian curves}
Let $\SL(p+p)\subset \GL(p+p)$ be the simple linear group acting on $\gl(p)$ according
to the action of $\SL(2p)$ on the homogeneous space $M = \SL(2p)/H$, where $H \subset \SL(2p)$ are matrices
of the form 
\[
\begin{pmatrix} E & 0\\ C& D\end{pmatrix}
\]
with $E, C, D \in M_{p\times p}$.
Assume we are in a neighborhood of the identity and so we can locally factor an element $g \in \SL(2p)$ into the product
\[
g = \begin{pmatrix} I & 0\\ Z & I \end{pmatrix} \begin{pmatrix} A&0\\ 0&B\end{pmatrix} \begin{pmatrix} I&Y\\ 0&I\end{pmatrix}.
\]
Then, a representation for the
homogeneous manifold $M$ is given by the section defined by matrices of the form
\[
\begin{pmatrix}I& u \\ 0&I\end{pmatrix}.
\]
Using this section we can write the $\SL(p+p)$
action on $\gl(p)$ as determined by the relation 
\[
g \begin{pmatrix} I&u\\ 0&I\end{pmatrix} = \begin{pmatrix} I &g\cdot u\\ 0&I \end{pmatrix} h
\]
where $h \in H$. This relation determines the action uniquely to be
\begin{equation}\label{action}
g\cdot u = A(u+Y)\left(B+ZA(u+Y)\right)^{-1}.
\end{equation}

\subsection{Group-based moving frames for Grassmannian generic curves}\label{sframe}
In this section we will use the normalization method described in \cite{FO} to find a group-based moving frame along
generic parametrized curves in the manifold $\mathrm{Gr}_2(\R^4)$. Let $J^{(k)}(\R, \mathrm{Gr}_2(\R^4))$ be the $k$-jet space of curves in $\mathrm{Gr}_2(\R^4)$, i.e., the set of equivalence classes of curves in $\mathrm{Gr}_2(\R^4)$ up to $k$-contact order. An $m$-order left (resp. right) group-based moving frame is a map
\[
\rho: J^{(m)}(\R, \mathrm{Gr}_2(\R^4)) \to \SL(2p)
\]
equivariant with respect to the prolonged action  of $\SL(2p)$ on $J^{(k)}(\R, \mathrm{Gr}_2(\R^4))$ and the left (resp.
right) action of $\SL(2p)$ on itself. Let $u_r = \frac{d^ru}{dx^r}$, where $x$ is the parameter. In the case at hand, the prolonged action is defined by the relation
\[
 g\cdot u_r = (g\cdot u)_r
 \]
 where the right hand side represents the formula given by differentiating $r$ times the action $g\cdot u$,  and
 writing it in terms of $x,u,u_1,\dots, u_r$, the coordinates in $J^{(k)}(\R,\mathrm{Gr}_2(\R^4))$, $r\le k$.
 
 In order to find a left moving frame $\rho$ along a generic curve $u(x)$, we will normalize the prolonged
action of $\SL(2p)$ on $J^{(m)}(\R, \mathrm{Gr}_2(\R^4))$, up to a certain order $m$. The choice of normalization (which defines a cross-section on the prolonged orbits of the group) is not unique and can be arbitrary as far as we retain full rank. If $\rho$ is a left (resp. right) moving frame, we call $K = \rho^{-1}\rho_x $ (resp. $K = \rho_x\rho^{-1}$) its associated {\it Maurer-Cartan matrix}. A theorem by Hubert (\cite{H}) states that, if $\rho$ is found via a normalization process, the entries of $K$ and their derivatives functionally generate all other differential invariants of the curve.
Different choices of normalization will give rise to different Maurer-Cartan matrices and different invariants. Our particular choices are made seeking both simplicity and a direct relation between the invariants (to this end the normalization constants are coordinated in both examples). Simplicity is important, a complicated Maurer-Cartan matrix will result in a difficult Hamiltonian study.
 
 At each step we will normalize fully,
i.e., we will normalize as many terms as permitted by the rank of the action. The terms that cannot be normalized will be differential invariants of the action. This process will determine
an element $g$ completely in terms of $u$ and its derivatives. It is known (see \cite{FO}) that $\rho^{-1} = g$ found through this process is a right moving frame. A left moving frame is given by its inverse $\rho$. 

We proceed to determine the (right) frame for the case at hand.

{\it Zeroth normalization equation}. For first normalization constant we will choose $i_0 = 0$. The first normalization
equations will be equations of zero differential order 
\[
g\cdot u =A(u+Y)\left(B+ZA(u+Y)\right)^{-1} = i_0 = 0
\]
which is satisfied by the choice $Y = -u$.  We have no zero order differential invariants.

{\it First normalization equation}. The next equations are the first order normalization equations $g\cdot u_1 = i_1$. We will make the normalization choice $i_1 = I$.
After substituting the previous normalization choice ($u+Y = 0$), the equation becomes
\begin{equation}\label{act1}
g\cdot u_1 = Au_1\left(B+ZA(u+Y)\right)^{-1} \end{equation}\[ - A(u+Y)\left(B+ZA(u+Y)\right)^{-1} Z \left(B+ZA(u+Y)\right)^{-1} = A u_1 B^{-1} = i_1 = I.
\]
This equation is satisfied with the choice 
\[
A = B u_1^{-1}.
\]
 Since $g \in \SL(2p)$, we also have
$\det A \det B = 1$ and so 
\[
\det B = (\det u_1)^{1/2}.
\] 
As expected, we have no first order differential invariants. Let us call $F = \left(B+ZA(u+Y)\right)$. 

{\it Second normalization equation}. After
differentiating again, the second order normalizations are found by substituting previous normalization values (in this case $u+Y = 0$ and $A = Bu_1^{-1}$)
and making the result equal to a constant $i_2$. In this case we choose $i_2=0$. That is
\begin{equation}\label{act2}
g\cdot u_2 = A u_2 F^{-1} - 2 A u_1F^{-1}ZAu_1 F^{-1} \end{equation}\[ - A(u+Y)F^{-1} Z\left(Au_2F^{-1} -2 A u_1F^{-1}ZAu_1 F^{-1}\right)  \]\[ = Bu_1^{-1}u_2 B^{-1} - 2 Z = i_2 = 0.
\]
This is solved with the choice 
\[
Z = \frac12 Bu_1^{-1} u_2 B^{-1}
\]
and we have no second order invariants. At this point we only  have left the determination of $B$ (although
not its determinant). 

{\it Third normalization equation}. The third order normalization equations are, after some simplification, given by
\begin{eqnarray}\label{act3}
g\cdot u_3  &=& B\left(u_1^{-1} u_3 - \frac32 u_1^{-1}u_2u_1^{-1} u_2\right) B^{-1} = i_3.
\end{eqnarray}
\begin{definition} We call 
\[
S(u) = u_1^{-1} u_3 - \frac32 u_1^{-1}u_2u_1^{-1} u_2
\]
 the {\it Schwarzian derivative of the Grassmannian curve $u$}.
 \end{definition}
Equation $BS(u)B^{-1} = i_3$  does not have full rank on $B$ for any choice of $i_3$, we can at most reduce $S(u)$ to a certain normal form under conjugation. Let us assume now that $p=2$. The action $U \to BUB^{-1}$ has two invariants, namely, the determinant and trace of $U$. That means that the rank of this action is two and we will be able to use at most two third order normalization equations. Therefore, there will
be two third order differential invariants given by the entries that cannot be further normalized. Notice that these are Grassmannian invariants of Schwarzian-type.

 The following result is a consequence of theorems that can be found, for example, in \cite{G}.
 \begin{proposition} Two generating and (functionally) independent third order differential invariants for a Grassmanian curve $u$ in $M$ are given
 by the determinant and the trace of its Schwarzian derivative. That is, any other third order differential 
 invariant of $u$ must be a function of these two.
 \end{proposition}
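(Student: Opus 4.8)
The plan is to reduce the statement to a classical description of conjugation invariants, using the moving frame just constructed. First I would record the residual freedom remaining after the third normalization. The group element $g=\rho^{-1}$ has been determined completely except for the choice of $B\in\GL(2,\R)$ subject to $\det B=(\det u_1)^{1/2}$, and since conjugation by $B$ depends on $B$ only up to a scalar, this residual freedom is exactly an $\SL(2,\R)$ worth of conjugations $U\mapsto BUB^{-1}$. Once the earlier normalizations $Y=-u$, $A=Bu_1^{-1}$, $Z=\frac12 Bu_1^{-1}u_2B^{-1}$ are in force, the only third-order jet coordinate not already fixed to a constant is $g\cdot u_3=B\,S(u)\,B^{-1}$ by \eqref{act3}; equivalently, under the prolonged $\SL(4,\R)$-action $S(u)$ is transformed by conjugation by a jet-dependent matrix. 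In particular $\mathrm{tr}\,S(u)$ and $\det S(u)$, being unchanged by that conjugation, are genuine third-order differential invariants of $u$.

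Next I would invoke the replacement theorem underlying the moving-frame method (\cite{FO}, \cite{H}) in the form adapted to a partially normalized cross-section: a function $I(x,u,u_1,u_2,u_3)$ is a third-order differential invariant of the prolonged action precisely when, after substituting the normalization values above, it becomes a function of $S(u)$ alone which is moreover invariant under the residual conjugation action of $\SL(2,\R)$ on $\gl(2,\R)$. Because all of the normalizations of order $\le 2$ were complete, there are no differential invariants of order $\le 2$, and this identifies the algebra of third-order differential invariants of $u$ with the pullback along $u\mapsto S(u)$ of the algebra of conjugation invariants on $\gl(2,\R)$.

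The core input is then the classical fact about conjugation invariants of $2\times 2$ matrices. Scalars act trivially by conjugation, so these invariants are the same for $\SL(2,\R)$, $\GL(2,\R)$ and $\mathrm{PGL}(2,\R)$; by the Chevalley restriction theorem, or directly by restricting to the dense open set of matrices with distinct eigenvalues (where each orbit is an entire fibre of $(\mathrm{tr},\det)$ and the invariants are the symmetric functions of the eigenvalues), the polynomial invariants form a free polynomial algebra on the coefficients of the characteristic polynomial, namely $\mathrm{tr}\,U$ and $\det U$, and on that generic stratum every smooth invariant is a smooth function of those two. This is what is quoted from \cite{G}. Together with the preceding paragraph it follows that every third-order differential invariant of $u$ is a function of $\mathrm{tr}\,S(u)$ and $\det S(u)$. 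Functional independence is the last point: for fixed invertible $u_1$ the map $u_3\mapsto S(u)=u_1^{-1}u_3-\frac32 u_1^{-1}u_2u_1^{-1}u_2$ is an affine submersion onto $\gl(2,\R)$, so $S(u)$ takes non-scalar values on an open dense set, and $d(\mathrm{tr}\,U)$ and $d(\det U)$ are independent at every non-scalar $U$; hence $\mathrm{tr}\,S(u)$ and $\det S(u)$ have independent differentials generically.

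The one step requiring genuine care is the passage in the second paragraph from cross-section coordinates to all differential invariants: since the moving frame here is only partial, the residual $\SL(2,\R)$ acts nontrivially and with a one-dimensional stabilizer at a generic value of $S(u)$, so one cannot apply Hubert's theorem verbatim and must instead use its refinement for incomplete normalizations, or argue directly that any $\SL(4,\R)$-invariant function, restricted to the partial cross-section, descends to the orbit space of the residual group. Everything else is either bookkeeping from the normalizations already performed or the invariant-theoretic fact cited above.
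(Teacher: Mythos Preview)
Your proposal is correct and follows essentially the same approach as the paper: the paper observes, just before the proposition, that the third normalization equation reduces to the conjugation action $U\mapsto BUB^{-1}$, notes that this action has exactly two invariants (trace and determinant), and then states the result as ``a consequence of theorems that can be found, for example, in \cite{G}'' without further argument. Your write-up fleshes out precisely those steps---the identification of the residual group after the order-$\le 2$ normalizations, the reduction to conjugation invariants via the (partial) replacement theorem, and the classical description of those invariants---and adds the verification of functional independence, which the paper leaves implicit.
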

There are many possible choices for $i_3$. Our choice will be to normalize $i_3$ depending on the 
nature of its eigenvalues (real or complex) . Let us write
\[
S(u) = \begin{pmatrix} s_1&s_2\\ s_3&s_4\end{pmatrix}, \hskip 2ex B = \begin{pmatrix} a&b\\ c&d\end{pmatrix}
\]
in that case $i_3$ is given by
\begin{eqnarray*}
i_3 &=&  B S(u) B^{-1} \\&=& (\det u_1)^{-1/2}\begin{pmatrix}d( as_1+bs_3)-c(as_2+bs_4)&-b( as_1+bs_3)+a(as_2+bs_4)\\ d(cs_1+ds_3) - c(cs_2+ds_4)&-b(cs_1+ds_3) +a(cs_2+ds_4)\end{pmatrix}.
\end{eqnarray*}
If we call $a/b = \alpha$ and $c/d = \beta$. 

In the generic case of real eigenvalues, diagonalizing $i_3$ is equivalent to solving the same equation for both $\alpha$ and $\beta$, namely
\begin{equation}\label{alphabeta}
P(\alpha) = \alpha^2s_2 + \alpha(s_4-s_1) - s_3 = 0, \\ \hskip 2ex P(\beta) = \beta^2s_2 + \beta(s_4-s_1) - s_3 = 0.
\end{equation}
Since $\det B\ne 0$ we need this equation to have two different solutions and we need $\alpha$
and $\beta$ to be those two different solutions (the discriminant condition on this equation is
the same as the one for the characteristic equation for $S(u)$). Therefore $i_3 = \begin{pmatrix} k_1&0\\ 0&k_2\end{pmatrix}$.  

In the generic case of complex eigenvalues, the normal form will be
\begin{equation}
\begin{pmatrix} k_1&-k_2\\ k_2&k_1\end{pmatrix}\end{equation}
and the corresponding conditions to achieve it
 are
\begin{equation}\label{alphabetac}
2\alpha\beta s_2+(\beta+\alpha)(s_4-s_1)-2s_3 = 0,  \hskip 2ex {b^2}P(\alpha)= {d^2}P(\beta),
\end{equation}
where $P(\alpha)$ and $P(\beta)$ are given as in (\ref{alphabeta}).

In both cases, the condition 
\begin{equation}\label{detB}
(\alpha - \beta) bd = \det B = (\det u_1)^{1/2}
\end{equation}
 together with the two equations solve for $a, c$
and $d$ in terms of $b$,  in the real case, or for $\alpha$, $b$ and $d$ in terms of $\beta$ in the complex case. The real case is straightforward, the complex case might require a little more explanation. From (\ref{alphabetac}) we obtain
\begin{equation}\label{alphac}
\alpha = \frac{\beta(s_4-s_1)-2s_3}{s_1-s_4 - 2\beta s_2}
\end{equation}
and
\begin{equation}
b^4 = \frac14 \frac{(2\beta s_2+s_4-s_1)^2 \det u_1}{P(\alpha)P(\beta)}.
\end{equation}
Using (\ref{alphac}) one can see that
\begin{equation}\label{palpha}
P(\alpha) = -\frac{P(\beta)}{(2\beta s_2 +s_4-s_1)^2} \Delta
\end{equation}
where $\Delta = (s_4-s_1)^2 + 4 s_3s_2$ is the discriminant of the characteristic polynomial of $S(u)$, and hence negative in our case. Therefore
\[
b^4 = \frac14 \frac{-\Delta(2\beta s_2+s_4-s_1)^4 \det u_1}{P(\beta)^2}.
\]
From (\ref{alphabetac}) and (\ref{palpha}), we also get
\begin{equation}\label{d}
d^4 = -\frac14 \frac{\Delta^3 \det u_1}{P(\beta)^2}.
\end{equation}
Thus, the only condition we need to be able to solve for $b$ and $d$ is $\det u_1 >0$, which has been required early on, in view of (\ref{detB}).
 
{\it Fourth normalization equation}. Finally, if we write the fourth order normalization equations and we use previous normalization we will
obtain an equation also of the form
\begin{equation}\label{fourth}
B R(u) B^{-1} = i_4,
\end{equation}
for some fourth order matrix $R(u)$ involving derivatives of $u$, but not $B$. We do not really need here its explicit expression, but we need to choose the last normalization equation carefully so as to simplify later calculations. 

{\it Real case}. 

Assume first we are in the real case. If we denote $R(u)$ by
\[
R(u) = \begin{pmatrix} r_1& r_2\\ r_3& r_4\end{pmatrix} 
\]
then normalizing its $(1,2)$ entry, for example, will give us the remaining missing equation. Although one usually chooses constants to normalize, we can also choose expressions that depend on invariants as we simply need a section of the prolonged orbit (see \cite{FO}). The resulting generation of invariants will remain unchanged, although the generators will be different. Assume we have diagonalized the Schwarzian derivative and it is given by
\[
\begin{pmatrix} k_1&0\\ 0&k_2\end{pmatrix}.
\]
Generically $k_1\ne k_2$ and we can assume that $k_1-k_2$ will not vanish near the basepoint. Let us choose as last normalization equation to be the $(1,2)$ entry equal $k_2-k_1$
\begin{equation}\label{realnorm}
(\det u_1)^{-1/2}b^2 \left( \alpha^2 r_2 + \alpha (r_4-r_1) - r_3\right) = k_2-k_1.
\end{equation}
(Notice that zero is a singular value for the normalization.) Recall that $k_2 - k_1 = \pm (\Delta)^{1/2}$, depending on the choice. We can exchange $k_1$ and $k_2$ if locally this equation cannot be solved. (In fact, we can also choose any multiple of $k_2-k_1$ instead of $k_2-k_1$ and obtain similar results.) Thus, with a proper choice, this equation can be generically solved. 

{\it Complex case}. 

Assume now that we are in the complex case. In order to find $\beta$ we can choose several different normalizations. The one we will choose here is to make the $(1,1)$ entry of the equation equal to the $(2,2)$ entry. The corresponding equation is
\[
\alpha \beta r_2 +\frac12 (\alpha+\beta)(r_4-r_1) + r_3 = 0.
\]
After substituting the value of $\alpha$ and simplifying, we obtain a quadratic equation for $\beta$, 
which can be locally solved in generic cases. As before, $k_1 = \frac12 (s_1+s_4)$ and $k_2 =\pm\frac12 (-\Delta)^{1/2}$, depending on the choice. Different normalizations will be needed for those generic cases for which the equation cannot be solved. Later on we will need to change this frame into a more convenient one.

This completes the calculation of
the moving frame and we can now find its associated Maurer-Cartan matrix. Unlike the classical case, the entries of the  Maurer-Cartan matrix will produce a complete set of independent and generating differential invariants (see \cite{H}). But before, we will write a final description of the generating invariants that can be obtained directly from this normalization.

\begin{theorem} A generic curve in $\mathrm{Gr}_2(\R^4)$ has a system of four functionally independent and generating  differential invariants, two of order three and one of each order 4 and 5.
\end{theorem}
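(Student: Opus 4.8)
The plan is to extract the four generators from the normalisation carried out above, to control their number by a dimension count for the prolonged $\SL(4,\R)$--action, and to confirm completeness with Hubert's theorem \cite{H}. Two of the invariants are already available: by the Proposition above they may be taken to be the eigenvalues $k_1,k_2$ of the normalised Schwarzian derivative $S(u)$ (equivalently $\mathrm{tr}\,S(u)$ and $\det S(u)$); they have order three, and the Proposition already records that they are functionally independent. Since the zeroth, first and second normalisation equations produced no invariants, there is nothing of order below three to account for.

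For the higher orders the crucial point is that, once the fourth normalisation equation (\ref{realnorm}) --- or its complex counterpart --- is imposed, the frame element $g$, hence $\rho$, is completely determined by $u,u_1,\dots,u_4$; no residual group freedom remains. Equivalently, on a dense open set of jets and locally, the prolonged action of $\SL(4,\R)$ on $J^{(m)}(\R,\mathrm{Gr}_2(\R^4))$ is free for every $m\ge 4$, so the number of functionally independent differential invariants of order $\le m$ equals the fibre dimension $4(m+1)$ of $J^{(m)}$ minus $\dim\SL(4,\R)=15$, that is $4(m+1)-15$. At $m=4$ this is $5$; since $k_1,k_2,k_1',k_2'$ are functionally independent along a generic curve, they span a four--dimensional space and exactly one genuinely new invariant $I_4$, necessarily of order four, is left over --- concretely an entry of the normalised fourth--order matrix $B\,R(u)\,B^{-1}$ of (\ref{fourth}) that is not a function of $k_1,k_2,k_1',k_2'$. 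At $m=5$ the count is $9$, while the derivatives of orders $\le 5$ of $k_1,k_2,I_4$ number $3+3+2=8$ and are again generically functionally independent, so exactly one new invariant $I_5$, of order five, appears; it can be taken to be a fifth--order entry of the Maurer--Cartan matrix $K=\rho^{-1}\rho_x$.

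It then remains to check that no further generators are needed. For $m\ge 6$ the derivatives of orders $\le m$ of $k_1,k_2,I_4,I_5$ number $(m-2)+(m-2)+(m-3)+(m-4)=4m-11=4(m+1)-15$, which is exactly the total number of functionally independent invariants of order $\le m$; since these derivatives are generically functionally independent they exhaust all of them, and so $\{k_1,k_2,I_4,I_5\}$ is a generating set of the stated orders. The same conclusion follows from Hubert's theorem \cite{H} once $K$ is computed below: its nonconstant entries generate all differential invariants, and they turn out to be differential functions of $k_1,k_2,I_4,I_5$, with $I_5$ among the highest-order entries (the $\g_1$--block of $K$).

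The step I expect to be the main obstacle is upgrading ``exactly three new functionally independent invariants at order four'' (immediate from the count) to ``exactly one new generator at order four'', and similarly at order five: this means showing that $k_1',k_2'$ are, for a generic curve, functionally independent of $k_1,k_2$ and of each other, and that $k_1'',k_2'',I_4'$ are likewise independent of the order~$\le 4$ invariants. The tool is the observation used throughout the normalisation --- each differentiation of an order--$r$ invariant introduces $u_{r+1}$ linearly, with coefficient the Jacobian of the invariant in $u_r$ --- so the combined Jacobians are block--triangular with respect to jet order and non--degenerate off a proper closed set, namely the locus where (\ref{realnorm}) or its complex analogue fails to be solvable, cut out by lower--order conditions such as $\Delta=(s_4-s_1)^2+4s_2s_3=0$. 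Identifying this generic set and verifying that all the required independence conditions hold on it simultaneously is the only delicate point; the remaining work is the routine computation of $K$ carried out below.
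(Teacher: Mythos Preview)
Your argument is correct and takes a genuinely different route from the paper's own proof. The paper proceeds constructively: it invokes the recurrence relations of \cite{FO} to show that in the normalised fourth-order matrix $i_4=B\,R(u)\,B^{-1}$ the diagonal entries (real case) or the symmetric/antisymmetric parts (complex case) are already the $x$-derivatives of the entries of $i_3$, so exactly one entry of $i_4$ is genuinely new; it then points to the analogous entry at the fifth normalisation step to locate the order-five generator. You instead rely on the orbit-dimension count $4(m+1)-\dim\SL(4,\R)$ once the prolonged action is free for $m\ge 4$, together with the observation that the $x$-derivatives of already-found generators account for all but one new invariant at orders four and five, and for all of them from order six on. What your approach buys is a clean bookkeeping argument that makes the ``exactly four generators'' conclusion visible without tracking matrix entries; what the paper's approach buys is an explicit identification of each generator as a specific entry of $i_4$, $i_5$ and ultimately of the Maurer--Cartan matrix, which is what is actually needed for the Hamiltonian computations in the rest of the paper. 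The ``main obstacle'' you flag --- the generic functional independence of $k_1,k_2,k_1',k_2'$ and then of $k_1'',k_2'',I_4'$ --- is handled in the paper implicitly by the same \cite{FO} recurrence, which is precisely the block-triangular Jacobian mechanism you sketch. One minor slip: the fifth-order generator $\kappa_4$ lives in $K_{11}=K_{22}$, i.e.\ in the $\g_0$-block of $K$, not in the $\g_1$-block as you wrote; the $\g_1$-block $K_{21}=-\tfrac12 i_3$ carries the third-order invariants.
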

\begin{proof}  Standard arguments that can be found in \cite{FO} tell us that, in the case of real Schwarzian eigenvalues, the diagonal of $i_4$ in (\ref{fourth}) equals the derivative of the diagonalization of $S(u)$. In the complex case, the same
arguments show that, if
\[
i_3 = \begin{pmatrix} k_1&-k_2\\ k_2&k_1\end{pmatrix}
\]
then $i_4$ in  (\ref{fourth}) will be of the form
\[
\begin{pmatrix} \alpha_1 & \beta_1\\ \beta_1 & -\alpha_1\end{pmatrix} + \begin{pmatrix} (k_1)_x&-(k_2)_x\\ (k_2)_x&(k_1)_x\end{pmatrix}.
\]
In the real case, (\ref{fourth}) adds one independent fourth order invariant to our group of third order ones, namely the $(2,1)$ entry of (\ref{fourth}) (which has not been normalized). In the complex case, our normalization implies $\alpha_1=0$. Hence, $\beta_1$ will be the additional functionally independent fourth order invariant.  

One more fifth order invariant
exists, one that is not  a function of the previous three invariants and their derivatives. The description of this generator  can be found following the normalization process in \cite{FO}.
If we write the fifth order normalization equations as above, there is one entry that corresponds to a previously normalized entry in $i_4$, namely $(1,2)$ in the real case and $(2,1)$ in the complex one. That entry is an independent fifth order differential invariant.
\end{proof}

The explicit expression is not relevant now. Indeed we will make use of a different, but more convenient, choice.

\subsection{Grassmannian Maurer-Cartan matrix associated to the left moving frame}
\begin{definition} Let $\rho$ be a left moving frame. The matrix
\[
K = \rho^{-1} \rho_x
\]
is called the (left) Maurer-Cartan matrix associated to $\rho$ (it is the horizontal component of the pull-back by $\rho$ of the Maurer-Cartan form of the group $G$) . The entries of $K$ are clearly differential invariants; a Theorem in \cite{H} states that they  generate all other differential invariants for the curve.
Since the right moving frame associated to a left one is its inverse, the right Maurer-Cartan matrix is the negative of the left one.
\end{definition}

Some of the work in \cite{FO} provides what are normally called horizontal recurrence formulas. These formulas can be used to recurrently calculate the entries of $K$. The following Proposition is a reformulation of these recurrence formulas for the case at hand. 

\begin{proposition}\label{horeq}
Let $K$ be the left Maurer-Cartan matrix associated to a moving frame $\rho$. Assume that $\rho\cdot u_s = i_s$, $s = 0,1,2\dots$. Then
\begin{equation}\label{recurrence}
K\cdot i_s = i_{s+1} - (i_s)_x
\end{equation}
$s = 0,1,2,\dots$, where the dot in $K\cdot i_s$ represents the s$^{\mathrm th}$ infinitesimal prolonged action of the Lie algebra on the element of the jet space $i_s$.
\end{proposition}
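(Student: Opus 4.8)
The plan is to recognize this Proposition as the horizontal recurrence formula of \cite{FO}, specialized to the jet coordinate functions $u_s$, and to prove it by differentiating the normalization equations along the curve. Regard the $s$-th prolonged action $g\cdot u_s$ as a function $\Psi_s=\Psi_s(g;u,u_1,\dots ,u_s)$ of the group element $g$ and of the jet coordinates; by construction, for \emph{fixed} $g$ the map $x\mapsto\Psi_s(g;u(x),\dots ,u_s(x))$ is the $s$-th $x$-derivative of $x\mapsto g\cdot u(x)$. The hypothesis $\rho\cdot u_s=i_s$ means $\Psi_s(\rho(x);u(x),\dots ,u_s(x))=i_s$ identically in $x$, so differentiating in $x$ and applying the chain rule writes $(i_s)_x$ as the sum of a contribution from the $x$-dependence of the jet and a contribution from the $x$-dependence of the frame. (Here $(i_s)_x$ is the derivative of $i_s$ along the curve; it vanishes when $i_s$ is a genuine constant but need not vanish for the invariant-dependent normalizations used later, and the argument is the same either way.)

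First I would treat the jet contribution: differentiating $\Psi_s$ in $x$ with $g$ frozen raises the order, since it is $\tfrac{d}{dx}$ of the $s$-th derivative of $g\cdot u$, hence equals $\Psi_{s+1}(g;u,\dots ,u_{s+1})$; evaluated at $g=\rho(x)$ this is $\rho\cdot u_{s+1}=i_{s+1}$. Next the frame contribution: I would use the group law for the prolonged action, $\Psi_s(g;u,\dots ,u_s)=\Psi_s\bigl(g\rho(x)^{-1};i_0,\dots ,i_s\bigr)$, valid precisely because $\rho$ sends the jet $(u,\dots ,u_s)$ to the cross-section values $(i_0,\dots ,i_s)$. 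Differentiating the right-hand side in $g$ at $g=\rho(x)$ in the direction $\rho_x$, the inner map $g\mapsto g\rho^{-1}$ acts by translation and sends $\rho_x$ to an element of $\g=T_eG$ built from $\rho^{-1}$ and $\rho_x$ — up to sign, the left Maurer--Cartan matrix $K$ — while the outer derivative $\partial_g\Psi_s\big|_{g=e}(i_0,\dots ,i_s)$ applied to that Lie-algebra element is, by definition of the $s$-th infinitesimal prolonged action (namely $\tfrac{d}{dt}\big|_{t=0}\exp(tK)\cdot i_s$), the quantity $K\cdot i_s$. Adding the two contributions gives $(i_s)_x=i_{s+1}-K\cdot i_s$, which is (\ref{recurrence}).

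The step requiring most care — and essentially the only obstacle — is the left/right bookkeeping: whether the $\rho$ in $\rho\cdot u_s=i_s$ is the normalizing (right) frame $g$ of Section~3.1 or its inverse, and hence whether the combination produced by the chain rule is $\rho^{-1}\rho_x$ or $\rho_x\rho^{-1}$, determines the sign in front of $K\cdot i_s$. I would pin these conventions down at the outset to match the definition $K=\rho^{-1}\rho_x$ of the left Maurer--Cartan matrix, after which the sign comes out exactly as stated. No special feature of $\mathrm{Gr}_2(\R^4)$ is used, so the identical argument yields the recurrence on any flat homogeneous space $G/P$; this is why the Proposition is presented as a mere reformulation of the general recurrence formulas of \cite{FO}.
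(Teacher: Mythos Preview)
The paper does not actually prove this proposition: it is stated as a reformulation of the horizontal recurrence formulas of \cite{FO} and then used immediately without further justification. Your argument supplies the proof that the paper omits, and it is essentially the standard derivation of those recurrence formulas---differentiate the normalization identity $\Psi_s(\rho;u,\dots,u_s)=i_s$ in $x$, split into the jet contribution (which raises order and yields $i_{s+1}$) and the frame contribution (which, via the group law, produces the infinitesimal prolonged action of the Maurer--Cartan element on $i_s$). Your caution about the left/right bookkeeping is exactly the point: the normalizing frame in Section~3.1 is the right frame $g=\rho^{-1}$, so the chain rule on $g\mapsto g\rho^{-1}$ produces $\rho_x\rho^{-1}=-K$, and the minus sign is what makes the formula come out as $(i_s)_x=i_{s+1}-K\cdot i_s$ rather than with the opposite sign. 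So your proof is correct and is precisely what the paper defers to \cite{FO} for; there is nothing to compare it against in the paper itself.
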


We will use this recurrence relation to find the left Maurer-Cartan matrix associated to the left moving frame $\rho = g^{-1}$, where $g$ was found in our previous section. Denote the Maurer-Cartan matrix by
\[
K = \begin{pmatrix} K_{11} & K_{12}\\ K_{21} & K_{22} \end{pmatrix}.
\]
If
\[
V = \begin{pmatrix} V_{11}&V_{12}\\ V_{21}& V_{22}\end{pmatrix} \in \g
\]
we can calculate the infinitesimal version of (\ref{action}) to be
\[
V\cdot u = V_{11} u - u V_{22}- uV_{21}u + V_{12}.
\]
Given that $i_0 = 0$ and $i_1 = I$ (see (\ref{act1})), for $s = 0$ (\ref{recurrence}) becomes
\[
K\cdot 0 = K_{12} = I.
\]
The infinitesimal version of the action in (\ref{act1}) is given by
\[
V \cdot u^{(1)} = V_{11} u_1 - u_1 V_{22} - u_1 V_{21} u - u V_{21} u_1.
\]
Now, $i_2 = 0$, so for $s = 1$ formula (\ref{recurrence}) becomes
\[
K\cdot i_1 = K_{11} - K_{22} = i_2 - (i_1)_x = 0,\hskip 4ex K_{11} = K_{22}
\]
Again, the infinitesimal version of the action in (\ref{act2}) is given by
\[
V\cdot u^{(2)} = V_{11} u_2 - u_2 V_{22} - 2 u_1 V_{21} u_1 +R_2
\]
where $R_2$ are terms that vanish whenever $u = 0$. From here, when $s = 2$, formula (\ref{recurrence}) becomes
\[
K \cdot i_2 = -2 K_{21} = i_3, \hskip 4ex K_{21} = -\frac12 i_3.
\]
Finally, the infinitesimal version of the action in (\ref{act3}) is given by
\[
V\cdot u^{(3)} = V_{11} u_3 - u_3 V_{22} - 3(u_2V_{21}u_1+u_1V_{21}u_2) + R_3
\]
where $R_3$ vanishes whenever $u = 0$. Therefore, for $s = 3$, equation (\ref{recurrence}) becomes
\begin{equation}\label{threeK}
K_{11} i_3 - i_3 K_{11} = i_4 - (i_3)_x.
\end{equation}

{\it Real case}.

In the case of real Schwarzian eigenvalues, both sides of (\ref{threeK}) have a vanishing diagonal, and so, the off-diagonal  entries of $K_{11}$
are determined by the entries of $i_4$. That is, if 
\[
K_{21} = -\frac12 i_3 = \begin{pmatrix} \k1&0\\ 0&\k2\end{pmatrix} 
\]
then, straightforward calculations show that the off-diagonal entries of $K_{11}$ are given by
\[
\begin{pmatrix} 0&1\\ \k3&0\end{pmatrix}
\]
where $\k3$ is an independent fourth order differential invariant determined by the one appearing
in $i_4$ through (\ref{threeK}). One can find explicitly $\k3$, but we do not really need its expression here and we will change frames soon. Notice that the choice of $k_2-k_1$ in the normalization of (\ref{realnorm}) implies that the $(1,2)$ entry of $K_{11}$ is constant and equal to $1$.  Recall that zero was a singular value.

We do not need to go to the fourth order. Indeed, since $K_{11} = K_{22}$, we do know that the traces of both matrices vanish. Therefore, only one unknown entry of $K$ remains, namely the one entry determining its trace. On the other hand, we do know that the entries of $K$ generate all differential invariants, and there is one fifth order invariant that is unaccounted for. Therefore, we can conclude that the missing entry, $\k4$, is such a generator without actually calculating it explicitly. If we were to need its explicit formula, we could use the recurrence formulas further to find them. Thus, in the real case
\[
K_{11} = \begin{pmatrix} \k4&1\\ \k3&-\k4\end{pmatrix}.
\]

{\it Complex case}.

In the case of complex Schwarzian eigenvalues
\[
i_3 = \begin{pmatrix} k_1&-k_2\\ k_2&k_1\end{pmatrix}, \hskip 2ex K_{21} = -\frac12 i_3 = \begin{pmatrix} \k1&-\k2\\ \k2&\k1\end{pmatrix}.
\]
As we pointed out before,
$i_4$ can be split into
\[
i_4 = \begin{pmatrix} \alpha_1 + (k_1)_x &  -(k_2)_x\\ (k_2)_x & -\alpha_1 +(k_1)_x\end{pmatrix}
\]
since, with our normalization, $\beta_1 = 0$. If $K_{11} = (k_{ij})$, then (\ref{threeK}) becomes
\[
\begin{pmatrix} k_2(k_{12}+k_{21}) & -2k_{11}k_2\\ -2k_2k_{11} & -k_2(k_{12}+k_{21})\end{pmatrix} = \begin{pmatrix}0 & \beta_1\\ \beta_1 &0\end{pmatrix}
\]
and so
\[
K_{11} = \begin{pmatrix} \k3 & -c \\ c&-\k3 \end{pmatrix}
\]
where $c$ is still to be found and $\k3$ is a fourth order invariant obtained from $\alpha_1$. As before, $c$ needs to be the
missing fifth order generator $\k4$, since it is the only undetermined entry. Finally 
\[
K_{11}= \begin{pmatrix} \k3& -\k4 \\\k4 & -\k3\end{pmatrix}.
\]

We have finally proved the following theorem.

\begin{theorem} Let $\rho$ be the left moving frame determined in the previous section. Its Maurer-Cartan matrix is given by
\begin{equation}\label{grassK}
K = \rho^{-1} \rho_x = \begin{pmatrix} \k4&1&1&0\\ \k3&-\k4&0&1\\ \k1&0&\k4&1\\ 0&\k2&\k3&-\k4\end{pmatrix}
\end{equation}
in the generic case of real eigenvalues, and by
\begin{equation}\label{grassKc}
K = \begin{pmatrix} \k3&-\k4&1&0\\ \k4&-\k3&0&1\\ \k1&-\k2&\k3 &-\k4\\\k2&\k1&\k4&-\k3\end{pmatrix}
\end{equation}
in the generic case of complex eigenvalues. In both cases $\k1$ and $\k2$ are third order differential invariants,  $\k3$ is  fourth order and $\k4$ is  fifth order. The invariants $\k i$, $i = 1,\dots,4$ form a generating system of independent differential invariants for generic curves of Grassmannians.
\end{theorem}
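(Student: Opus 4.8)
The plan is to assemble the block-by-block computations already made above into the single matrix $K$, and then read off the generating and independence statements from the preceding Theorem together with Hubert's theorem (\cite{H}). First I would write $K$ in $2\times 2$ blocks, $K=\begin{pmatrix}K_{11}&K_{12}\\ K_{21}&K_{22}\end{pmatrix}$, noting that $K\in\sl(4,\R)$ because $\rho$ is $\SL(4,\R)$-valued, so $\operatorname{tr}K_{11}+\operatorname{tr}K_{22}=0$. Feeding the normalization values $i_0=0$, $i_1=I$, $i_2=0$ — together with the infinitesimal prolonged actions $V\cdot u^{(r)}$ computed above — into the horizontal recurrence (\ref{recurrence}) of Proposition \ref{horeq} for $s=0,1,2$ yields in turn $K_{12}=I$, then $K_{11}=K_{22}$ (hence $\operatorname{tr}K_{11}=0$), then $K_{21}=-\tfrac12 i_3$. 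The only block still to be determined is $K_{11}$, and the $s=3$ instance of the recurrence is the relation (\ref{threeK}), $K_{11}i_3-i_3K_{11}=i_4-(i_3)_x$, which I would then solve separately in the two cases.

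\emph{Real case.} Substituting the diagonalized normal form $i_3=\operatorname{diag}(k_1,k_2)$ gives $K_{21}=-\tfrac12 i_3=\operatorname{diag}(\k1,\k2)$, with $\k1,\k2$ the two independent third-order invariants (up to the factor $-\tfrac12$, the diagonalization of $S(u)$). Both sides of (\ref{threeK}) then have vanishing diagonal — the left because $i_3$ is diagonal, the right because the diagonal of $i_4$ equals $(i_3)_x$, as shown in the proof of the preceding Theorem — so (\ref{threeK}) constrains only the off-diagonal of $K_{11}$: writing $K_{11}=\begin{pmatrix}a&b\\ c&-a\end{pmatrix}$ one gets $b(k_2-k_1)=(i_4)_{12}$ and $c(k_1-k_2)=(i_4)_{21}$. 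Here $b=1$, because the last normalization equation (\ref{realnorm}) was chosen precisely so that $(i_4)_{12}=k_2-k_1$; and $c=(i_4)_{21}/(k_1-k_2)$ is a new fourth-order invariant, which I name $\k3$. The diagonal entry $a$ is unconstrained by (\ref{threeK}) and is the remaining invariant $\k4$. Reassembling the four blocks produces (\ref{grassK}).

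\emph{Complex case.} Here $i_3=\begin{pmatrix}k_1&-k_2\\ k_2&k_1\end{pmatrix}$, so $K_{21}=-\tfrac12 i_3=\begin{pmatrix}\k1&-\k2\\ \k2&\k1\end{pmatrix}$, again with $\k1,\k2$ of order three. Using the normal form of $i_4$ established in the proof of the preceding Theorem and writing $K_{11}=(k_{ij})$ with $k_{22}=-k_{11}$, relation (\ref{threeK}) becomes
\[
\begin{pmatrix}k_2(k_{12}+k_{21})&-2k_2k_{11}\\ -2k_2k_{11}&-k_2(k_{12}+k_{21})\end{pmatrix}=\begin{pmatrix}0&\beta_1\\ \beta_1&0\end{pmatrix}.
\]
Since $k_2\neq 0$ in the generic case, this forces $k_{21}=-k_{12}$ and $k_{11}=-\beta_1/(2k_2)$, a fourth-order invariant $\k3$, while $k_{12}$ is unconstrained by (\ref{threeK}) and gives the last invariant $\k4$. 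Reassembling yields (\ref{grassKc}).

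Finally, in either matrix the only non-constant entries are $\pm\k1,\pm\k2,\pm\k3,\pm\k4$, so Hubert's theorem (\cite{H}) shows that $\{\k1,\k2,\k3,\k4\}$ functionally generates all differential invariants of a generic Grassmannian curve, with $\k1,\k2$ of order three, $\k3$ of order four and $\k4$ of order five. The preceding Theorem asserts that this algebra of invariants has exactly four functionally independent generators, so any generating set of size four is automatically functionally independent — otherwise one generator could be dropped, leaving a generating set of size at most three, contradicting the count. I expect the only real work to be the bookkeeping behind (\ref{threeK}): tracking exactly which entries of $i_4$ — and, for the fifth-order generator, of $i_5$ — survive the normalizations, and keeping the signs and the factor $-\tfrac12$ consistent so that the blocks slot into (\ref{grassK})--(\ref{grassKc}) as written; the conceptual steps — running the recurrence and invoking (\cite{H}) — are routine.
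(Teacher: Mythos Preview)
Your proposal is correct and follows essentially the same route as the paper: run the horizontal recurrence (\ref{recurrence}) for $s=0,1,2$ to obtain $K_{12}=I$, $K_{11}=K_{22}$, $K_{21}=-\tfrac12 i_3$, then use (\ref{threeK}) together with the structure of $i_4$ from the preceding Theorem to pin down all but one entry of $K_{11}$, and finally identify the remaining undetermined entry with the missing fifth-order generator via Hubert's theorem. The only addition beyond the paper is your closing remark that a generating set of size four must be functionally independent by the count established in the preceding Theorem, which is a clean way to package the independence claim.
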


\subsection{Invariant evolutions of Grassmannian curves}\label{3.3}

Once a group-based moving frame has been found, a theorem in \cite{M1} provides us with a classical moving frame (an invariant curve in the frame bundle over the curve). This classical frame can be used to write the most general form of an invariant evolution of curves in $M$, i.e., an evolution for which the group takes solutions to solutions. The following theorem explains what that evolution is in our case. The theorem is a direct consequence of the results in \cite{M1}.
\begin{theorem} Let $\rho = g^{-1}$ be the left moving frame obtained in section \ref{sframe}, and let $B$ be the matrix defining $g$. Then, the most general form of an evolution of Grassmannian curves, invariant under the action (\ref{action}), is given by
\begin{equation}\label{invev}
u_t =  u_1 B^{-1} \r B
\end{equation}
where $\r$ is any $2\times 2$ matrix depending on $\k i$, $i=1,\dots, 4$, and their derivatives.
\end{theorem}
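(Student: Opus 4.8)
The plan is to obtain this as the specialization to our frame $\rho$ of the general description of $G$-invariant curve evolutions from \cite{M1}. I would proceed in two stages: first recall why an invariant evolution is the image, under the differential at the origin of the diffeomorphism of $M$ determined by $\rho$, of an arbitrary ``velocity'' that is a function of the differential invariants; and then compute that differential explicitly for the frame of Section \ref{sframe}.

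For the first stage, let $o\in M$ be the origin (the point $u=0$), whose tangent space I identify with $\g_{-1}$, the space of upper-right blocks $M_{2\times2}$, and for $h\in\SL(4,\R)$ let $\Phi_h\colon M\to M$, $m\mapsto h\cdot m$, be the diffeomorphism given by \eqref{action}. Since $g=\rho^{-1}$ was normalized so that $g\cdot u=0$ along the curve, we have $u=\rho\cdot o=\Phi_\rho(o)$; and left-equivariance of the left moving frame reads $\rho(g\cdot u)=g\,\rho(u)$. Using these, one checks — exactly as in \cite{M1} — that an evolution $u_t=F(u,u_1,\dots)$ is invariant under \eqref{action}, i.e.\ $F(g\cdot u)=(d\Phi_g)_u F(u)$, if and only if $F(u)=(d\Phi_{\rho(u)})_o\!\left(\r(u)\right)$ for some $T_oM$-valued $\r$ with $\r(g\cdot u)=\r(u)$, that is, for some $\r$ that is a function of the differential invariants. (In the forward direction one sets $\r:=(d\Phi_{\rho^{-1}})_u F$ and verifies its invariance from the equivariance of $\rho$; the converse follows from $\Phi_{g\rho}=\Phi_g\circ\Phi_\rho$ and the chain rule.) By the earlier theorem, $\kappa_1,\dots,\kappa_4$ generate all the differential invariants, so $\r$ may be taken to be an arbitrary $2\times2$ matrix in the $\kappa_i$ and their derivatives.

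For the second stage it remains to compute the single linear map $(d\Phi_\rho)_o$. With the normalization data of Section \ref{sframe}, namely $Y=-u$, $A=Bu_1^{-1}$ and $Z=\tfrac12 Bu_1^{-1}u_2B^{-1}$, the left frame factors as
\[
\rho=g^{-1}=\begin{pmatrix} I&u\\ 0&I\end{pmatrix}\begin{pmatrix} A^{-1}&0\\ 0&B^{-1}\end{pmatrix}\begin{pmatrix} I&0\\ -Z&I\end{pmatrix}.
\]
Reading \eqref{action} off on each factor gives $m\mapsto m(I-Zm)^{-1}$ for the lower-unipotent factor, $m\mapsto A^{-1}mB$ for the block-diagonal factor, and the translation $m\mapsto m+u$ for the upper-unipotent factor; the first two fix $o$ and the last sends $o$ to $u$. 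Differentiating at $o$ and composing, the two unipotent factors contribute the identity — so the precise form of $Z$, hence the second-order normalization, is immaterial — and the diagonal factor contributes $v\mapsto A^{-1}vB=u_1B^{-1}vB$ (using $A=Bu_1^{-1}$). Therefore $(d\Phi_\rho)_o(v)=u_1B^{-1}vB$, and combined with the first stage this gives $u_t=u_1B^{-1}\r B$, i.e.\ \eqref{invev}.

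The real work — and where I would be careful — is the first stage: keeping the left/right conventions for the moving frame straight, and pinning down the identification $T_oM\cong\g_{-1}$ so that a tangent vector at the origin is literally a $2\times2$ matrix. Granted that, the second stage is a short computation, robustly independent of everything in the normalization except the first-order block $u_1$ and the conjugating matrix $B$.
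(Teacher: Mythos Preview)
Your argument is correct and is essentially the same as the paper's: the paper simply cites \cite{M1} for the general formula $\rho_{-1}^{-1}(\rho_{-1})_t = Ad(\rho_0)(\r)$ in $|1|$-graded homogeneous spaces, then plugs in the explicit factors $\rho_{-1},\rho_0,\rho_1$ of the frame from Section~\ref{sframe} to read off $u_t=u_1B^{-1}\r B$. Your two stages reproduce exactly this, with your computation of $(d\Phi_\rho)_o$ via the three factors being the manifold-level version of the adjoint formula $Ad(\rho_0)|_{\g_{-1}}$; the only difference is that you spell out the equivariance argument behind the citation to \cite{M1} rather than invoking it as a black box.
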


This expression is the particular form in our example of the more general formula for $|1|$-graded Lie algebras $\rho_{-1}^{-1} \left(\rho_{-1}\right)_t = Ad(\rho_0)(\r)$, where $\r\in \g_{-1}$ is an element of $\g_{-1}$ depending on differential invariants of the curve, and where $\rho=\rho_{-1}\rho_0\rho_1$ is a left moving frame factored out following the gradation, with $\rho_{-1}$ defining our section (for more details see \cite{M1}). In our case
\[
\rho_{-1} = \begin{pmatrix} I&u\\0&I\end{pmatrix}, \hskip 2ex \rho_0 = \begin{pmatrix} u_1B^{-1}& 0\\0& B^{-1}\end{pmatrix}, \hskip 2ex \rho_1 = \begin{pmatrix} I&0\\ -Z&I\end{pmatrix}.
\]
The matrix $\r$ has a Hamiltonian interpretation that will be explained in our next section. Because
of the Hamiltonian interpretation, the following Theorems are of interest to us. They describe the behavior of evolutions as non-Schwarzian invariants $\k3, \k4$ vanish, with $\r$ having the same normal form as $S(u)$. Since $B S(u) B^{-1} = i_3 = -2 K_{12}$, the particular choice $\r = K_{12}$
corresponds to the flow
\[
u_t =  u_1 B^{-1} K_{12} B = -\frac12 u_1S(u)
\]
which is a generalization of the well-known {\it KdV-Schwarzian evolution} to Grassmannian flows.

\begin{theorem}\label{levelset} Assume $S(u)$ has real eigenvalues and
assume $\r$ is a diagonal matrix.
Then, the level set $\k3 = 0$ is invariant under evolution (\ref{invev}). 

For general diagonal $\r$, and in particular when $\r = K_{12}$, 
the level set $\k4 = 0$ is not invariant under evolution (\ref{invev}).

Assume $S(u)$ has complex eigenvalues, and assume $\r = \begin{pmatrix} a&-b\\ b&a\end{pmatrix}$ is in normal form. Then,  for general $\r$ (and in particular if $\r = K_{12}$),  the flow blows up as $\k3 \to 0$. The level set $\k4=\k3=0$
is, therefore, not preserved.
\end{theorem}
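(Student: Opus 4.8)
The plan is to compute the evolution that the invariant curve flow (\ref{invev}) induces on $\k1,\dots,\k4$ and then to read off the three assertions by inspection. The tool is the flatness (zero-curvature) condition used throughout \cite{M1}: viewing $\rho$ as a function of both $x$ and $t$ and setting $N=\rho^{-1}\rho_t$, one has
\[
K_t=N_x+[K,N].
\]
By (\ref{invev}) and the factorization $\rho=\rho_{-1}\rho_0\rho_1$ displayed just after it, the $\g_{-1}$-block of $N$ is exactly $\r$; the $\g_0$- and $\g_1$-blocks of $N$ are \emph{not} free, but are determined, working up the grading, by the requirement that the right-hand side be tangent to the cross-section --- that is, that $K_t$ carry nonzero entries only in the four slots occupied by $\k1,\dots,\k4$ in (\ref{grassK}) (resp.\ (\ref{grassKc})), with the zeros and the constant $1$'s preserved elsewhere. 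Performing this elimination expresses $N$, and hence the system $(\k i)_t$, as differential expressions in $\r$ and the $\k j$ whose denominators are built from $K_{11}$. The structural point I will exploit is the contrast between the two $K_{11}$'s: in the real case $K_{11}=\left(\begin{smallmatrix}\k4&1\\\k3&-\k4\end{smallmatrix}\right)$ still carries the constant $1$ and stays non-degenerate at $\k3=\k4=0$, whereas in the complex case $K_{11}=\left(\begin{smallmatrix}\k3&-\k4\\\k4&-\k3\end{smallmatrix}\right)$ vanishes there, so the elimination of the top block of $N$ breaks down.

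I would first treat the real case with $\r=\mathrm{diag}(r_1,r_4)$. The block equation $(K_t)_{12}=0$ gives $N_{11}-N_{22}=\r_x+[K_{11},\r]$, whose lower-left entry is $\k3(r_1-r_4)$; feeding this, together with $K_{21}=\mathrm{diag}(\k1,\k2)$, through the remaining steps of the elimination, I expect that the lower-left entry of $(K_t)_{11}$ --- which is $(\k3)_t$ --- comes out divisible by $\k3$ in every monomial (equivalently, it vanishes identically whenever $\k3\equiv0$). Hence $\{\k3=0\}$ is invariant. Reading the top-left entry of $(K_t)_{11}$ instead gives $(\k4)_t$; restricting to $\{\k3=\k4=0\}$, where $K_{11}=\left(\begin{smallmatrix}0&1\\0&0\end{smallmatrix}\right)$, and simplifying, I expect a surviving term that is a nonzero differential expression in $r_1,r_4,\k1,\k2$, in particular nonzero when $\r=K_{12}$; thus $(\k4)_t\not\equiv0$ on that level set and $\{\k4=0\}$ is not invariant.

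For the complex case I would substitute $\r=\left(\begin{smallmatrix}a&-b\\b&a\end{smallmatrix}\right)$ and run the same elimination, now imposing in addition that $(K_t)_{21}$ preserve the normal form of $K_{21}$ (equal diagonal entries, skew-symmetric off-diagonal). Solving for the top block of $N$ then requires inverting the adjoint action of $K_{11}$ on the appropriate complement, and since $K_{11}\to0$ as $(\k3,\k4)\to0$ this will introduce a denominator that is a power of $\k3$ (equivalently of $\k3^2-\k4^2$). Consequently $N$, and with it each $(\k i)_t$, blows up as $\k3\to0$ with $\k4$ near $0$: the flow does not extend across $\{\k3=0\}$, so a fortiori it does not preserve $\{\k3=\k4=0\}$. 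Taking $\r=K_{12}$ yields the stated conclusion for the Schwarzian flow.

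The main obstacle is exactly the order-by-order elimination of the $\g_0$- and $\g_1$-parts of $N$: the flatness equation furnishes the skeleton, but only by carrying that computation far enough does one see (i) the factor of $\k3$ in every term of $(\k3)_t$ in the real case, (ii) the surviving nonzero term of $(\k4)_t$ on $\{\k3=\k4=0\}$, and (iii) the precise $\k3$-power appearing in the denominator in the complex case, together with the check that no such denominator appears in the real case. By comparison, prolonging the flow by differentiating (\ref{invev}) and the subsequent $2\times2$-block matching in $K_t=N_x+[K,N]$ are routine.
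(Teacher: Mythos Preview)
Your plan is correct and is precisely the paper's approach: split $K_t=N_x+[K,N]$ into the four block equations, solve the $\g_{-1}$-equation for $N_{11}-N_{22}$ and the trace-free $\g_0$-equation for $N_{21}$, then use the normal-form constraints on $(K_{21})_t$ and $(K_{11})_t$ to pin down the remaining entries of $N_{11}$ and finally read off $(\k3)_t$, $(\k4)_t$. One small correction to your complex-case heuristic: the denominator that actually appears is $\k3$ alone, not $\k3^2-\k4^2$; it arises from the constraint that the $(1,2)+(2,1)$ entry of $(K_{11})_t$ vanish, which reads $(b_1+c_1)'+2\k3(b_1-c_1)+\cdots=0$ and forces division by $\k3$ when solving for $b_1-c_1$, so the blow-up occurs at $\k3=0$ regardless of $\k4$.
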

The relevance of the second part of this proposition will be better understood in our next section. The situation described in this theorem is very similar to the one for spinor curves (\cite{M3}) for which it was proved that {\it any} choice of local moving frame would induce this situation.
\begin{proof}
Let $u(t,x)$ be a flow solution of (\ref{invev}) with $\r$ diagonal. A theorem in \cite{M1} (or a straightforward calculation) shows that $N = \rho^{-1}\rho_t$ is of the form
\[
N = \begin{pmatrix} N_{11}& \r \\ N_{21}&N_{22}\end{pmatrix}.
\]
Furthermore, since $\frac d{dx}$ and $\frac d{dt}$ commute, compatibility of $\rho_x = \rho K$ and 
$\rho_t = \rho N$ imply
\[
K_ t = N_x+[K,N].
\]
This breaks up into a number of individual equations (according to the gradation of the algebra), namely \begin{eqnarray}\label{eq1}
0 &=& \r_x + N_{22} -N_{11} +\left[ K_{11}, \r\right]\\ \label{eq2}
0&=& \left(N_{11} -N_{22}\right)_x + \left[K_{11}, N_{11}-N_{22}\right]-\r K_{21} -K_{21}\r+2N_{21}\\\label{eq3}
(K_{11})_t &=& (N_{11})_x +\left[K_{11}, N_{11}\right]-\r K_{21} +N_{21}\\ \label{eq4}
(K_{21})_t
&=& (N_{21})_x+K_{21}N_{11}-N_{22}K_{21}+\left[K_{11}, N_{21}\right].
\end{eqnarray}

{\it Real case}.

Assume now that $S(u)$ has real eigenvalues and that $\r = \begin{pmatrix} r_1&0\\ 0&r_2\end{pmatrix}$ is diagonal. Let us call
\[
N_{11} = \begin{pmatrix} a_1&b_1\\ c_1&d_1\end{pmatrix}, \hskip 4ex N_{22} = \begin{pmatrix} a_2&b_2\\ c_2&d_2\end{pmatrix}.
\]
Then, using (\ref{eq1}) we obtain
\begin{equation}\label{n22n11}
N_{11}-N_{22} =\begin{pmatrix} a_1-a_2&b_1-b_2\\ c_1-c_2&d_1-d_2\end{pmatrix} = \begin{pmatrix} (r_1)_x &r_2-r_1\\ \k3(r_1-r_2)&(r_2)_x\end{pmatrix}. \end{equation}
Using (\ref{eq2}) we have
\begin{equation}\label{n21}
 N_{21} = \begin{pmatrix} -\frac12 r_1'' + \k1 r_1& (r_1-r_2)'+\k4(r_1-r_2)\\ (\frac12(\k3)'-\k4\k3)(r_1-r_2)+\k3(r_1-r_2)'& -\frac12 r_2'' + r_2 \k2\end{pmatrix}.
 \end{equation}
From here, (\ref{eq4}) becomes
\begin{equation}\label{eq5}
\begin{pmatrix} (\k 1)_t & 0\\ 0& (\k 2)_t\end{pmatrix} = \begin{pmatrix} -\frac12 r_1''' + (r_1 \k1)'+\k1r_1'& \k1b_1-\k2b_2 + \frac32(r_1-r_2)''+\k2r_2-\k1r_1\\\\ \k2 c_1 - \k1c_2& -\frac12 r_2'''+(r_2\k 2)' + \k 2r_2'\end{pmatrix} + X_1
\end{equation}
where 
\(
X_1\) is a matrix whose explicit expression can be directly found, and whose relevant property is that it is well-defined for $\k3$ and $\k4$ sufficiently small, $X_1$ becomes strictly upper triangular as $\k3 \to 0$, and  $X_1 \to 0$ as $\k3,\k4 \to 0$.

Therefore,
(\ref{eq5}) determines completely the evolution of $\k1$ and $\k2$. Notice that the choice $r_i = \k i$ produces a decoupled system of KdV equations in $\k1$ and $\k2$ in the limit. From (\ref{n22n11}) we have that, as $\k3 \to 0$,  $c_1 = c_2$, which together with (\ref{eq5}) gives us, as $\k3 \to 0$, (and as long as $\k1-\k2 \ne 0$) 
\[
c_1 = c_2 = 0.
\]
Equations (\ref{n22n11}) and (\ref{eq5}) also provide the explicit expression for $b_1$ and $b_2$, these are
\begin{eqnarray}
b_1 &=& -\frac32 \frac{(r_1-r_2)''}{\k1-\k2}+ \frac{\k1+\k2}{\k1-\k2}r_1 - 2\frac{\k2}{\k1-\k2} r_2+ Y_1
\\
b_2&=& -\frac32 \frac{(r_1-r_2)''}{\k1-\k2}+ 2\frac{\k1}{\k1-\k2} r_1- \frac{\k1+\k2}{\k1-\k2}r_2  +Y_2
\end{eqnarray}
where $Y_i \to 0$ as $\k4 \to 0$. With these values, equation (\ref{eq3}) proves the proposition. The entry $(2,1)$ of (\ref{eq3})
shows $(\k3)_t = 0$ as $\k3 \to 0$ (independently of $\k4$), and hence $\k3 = 0$ is preserved by the evolution. Finally,  the $(1,2)$ entry of (\ref{eq3}) will determined the value of $d_1-a_1$, and we know that $a_1 - a_2 = (r_1)_x$, $d_1 - d_2 = (r_2)_x$ and $a_1 + a_2 + d_1 + d_2 = 0$ (from the algebra condition). These relations completely determine all values of $N$. If we further assume that $\k4 \to 0$, after straightforward calculations the remaining values show that 
\[
(\k4)_t = \frac12 b_1'' + \frac14 (r_1'' - r_2'')
\]
and so $\k4 
 = 0$ is not, in general, preserved, even in the case $r_i = \k i$.
 
{\it Complex case}. 

Assume now that $S(u)$ has complex eigenvalues, and assume that 
 \[
 \r = \begin{pmatrix} r_1&-r_2\\ r_2&r_1\end{pmatrix}.
 \]
 Using the same equations as in the real case, we obtained the values
 \[
 N_{11}-N_{22}= \begin{pmatrix} r_1'+r_2&-r_2'-2\k3r_2\\r_2'-2\k3r_2&r_1'-r_2\end{pmatrix}
 \]
 \[
 N_{21} = \begin{pmatrix} -\frac12r_1''-r_2'+r_1\k1-r_2\k2&\frac12 r_2''-(r_1\k2+r_2\k1)\\ -\frac12r_2''+r_2\k1+r_1\k2&-\frac12r_1''+r_2'+r_1\k1-r_2\k2\end{pmatrix} + Z_1
 \]
 where
 \[
 Z_1 = \begin{pmatrix}-2\k3\k4r_2+\k3r_2&-r_2\k4+\k3(r_2'+2\k3r_2)+(r_2\k3)'\\ -r_2\k4+\k3(r_2'-2\k3r_2)+(\k3r_2)'&2\k3\k4r_2-\k3xr_2\end{pmatrix}.
 \]
 The $t$-evolution of $K_{12}$ in (\ref{eq1}) imposes some conditions on the entries of LHS, namely the $(1,1)$ and $(2,2)$ entries must be equal, and the $(1,2)$ and $(2,1)$ entries must be the negative of each other. This leads to solving for $c_1+b_1$ and for $a_1-d_1$, which are determined by
 \[
 \k2(c_1+b_1) =  z_1, \hskip 2ex \k2(a_1-d_1) = z_2
 \]
 where $z_1$ and $z_2$ are well defined as $\k3$ and $\k4$ vanish, and they both vanish in the limit. Recall that the trace of $N$ is zero. Therefore $a_1+d_1+a+2+d_2 = 2(a_1+d_1) - 2r_1' = 0$. From here
 $a_1+d_1 = r_1'$ and, in the limit, $a_1 = \frac12 r_1'$.
 From here we obtain the evolutions for $\k1$ and $\k2$, namely
 \begin{equation}\label{grasscomplex}
\begin{pmatrix} \k1\\ \k2\end{pmatrix}_t =\begin{pmatrix} -\frac12D^3 +D\k1+\k1D & -D\k2-\k2D\\
D\k2+\k2D&-\frac12 D^3 + D\k1+\k1D\end{pmatrix}\begin{pmatrix} r_1\\r_2\end{pmatrix} + \begin{pmatrix} y_1\\y_2\end{pmatrix}
 \end{equation}
where, again, $y_1$ and $y_2$ are well defined and vanish as $\k3$, $\k4$ vanish. In the limit, the evolution becomes a complexly coupled system of KdV equations, evolution that also appears in the conformal case. Still, such a limit creates a singularity in the flow. A final condition is imposed by the evolution 
of $K_{11}$ in (\ref{eq3}), where the sum of the $(1,2)$ and $(2,1)$ entries vanish. The condition gives us an equation that allows us to finally solve for $c_1$ and $b_1$ (we can solve for all the other ones
with the data we have up to this point). The equation is
\[
(c_1+b_1)'+2\k3(b_1-c_1)-2(d_1-a_1)\k4-2\k4r_2+2\k3r_2'+2(\k3r_2)' = 0.
\]
The expressions for $d_1-a_1$ and $c_1+b_1$ tells us that $b_1$ and $c_1$ are not well-defined in the limit. Therefore, the $t$-evolution of $\k4$ also blows up as $\k3 \to 0$. Finally, one can check that the level set $\k3 = 0$  is preserved and $(\k3)_t = 0$ as $\k4 \to 0$. We leave the rest of the details to the reader.
\end{proof}
\section{Geometric Hamiltonian structures}\label{Hamiltonian}
There are two well-known Poisson structures defined on the space of Loops on the dual of a semisimple Lie algebra. If $\Hop, \Fop:\Lo \g^\ast \to \R$ are two operators, their Poisson brackets are given by new operators on $\Lo\g^\ast$ defined as
\begin{equation}\label{mainbr}
\{\Hop, \Fop\}(L) = \int_{S^1} \langle B\left(\var\Hop\right)_x + \mathrm{ad}^\ast(\var{\Hop})(L), \var \Fop\rangle dx
\end{equation}
\begin{equation}\label{secbr}
\{\Hop,\Fop\}_0(L) = \int_{S^1} \langle \mathrm{ad}^\ast\left(\var \Hop\right)(L_0), \var\Fop\rangle dx
\end{equation}
where $B$ is any bilinear identification of $\g$ with its dual and where $L_0 \in \g$ is any constant element. The elements $\var\Hop, \var\Fop\in \g$ are the variational derivatives at $L$ and $\langle, \rangle$ is the invariant pairing of $\g$ with $\g^\ast$. The usual choice is to identify $\g$ with its dual $\g^\ast$ using the Killing form, or, for example the trace of the product if $\g\subset \gl(n,\R)$. With these standard choices, if $\g \subset \gl(n,\R)$, then the dual to the entry $(i,j)$ is given by the entry $(j,i)$ (or a multiple of it). 

For a given choice of normalization equations, let's denote the space of Maurer-Cartan matrices by $\K$. The author of \cite{M1} showed that, locally around a generic curve $u(x)\in G/H$ with $G$ semisimple, the space $\K$ 
 can be written as a quotient $U/\Lo H$ where $U\subset \Lo\g^\ast$ is open. She also showed that the Poisson bracket (\ref{mainbr}) could then be reduced to the quotient to produce what she called a Geometric Poisson structure, a Hamiltonian structure on the space of differential invariant of curves. The Poisson bracket (\ref{secbr}) cannot always be reduced, but its reduction usually indicates the existence of a curve evolution inducing a completely integrable system on its invariants. In this case, the integrable system is biHamiltonian with respect to both reduced brackets. The paper \cite{M1} also showed how given {\it any} evolution Hamiltonian with respect to the geometric Poisson bracket, one could always find a geometric realization as invariant flows in $G/H$. The coefficients of the realization were explicitly related to the Hamiltonian functional. 

Both the reduction of (\ref{mainbr}) and the geometric realizations of Hamiltonian evolutions can be found explicitly in most cases, under some minimal conditions. For example, the reduction process can be described in the following terms: given $h, f:\K \to \R$, we extend them to functionals $\Hop,\Fop:\Lo\g^\ast \to \R$ so that the extensions are constant on the $\Lo H$-leaves along $\K$. This condition is infinitesimally described by stating 
\begin{equation}\label{Hcond}
\left(\var\Hop\right)_x +\mathrm{ad}(K) \left(\var\Hop\right) \in \h^0,
\end{equation}
with $K \in \K$. If two such extensions can be found, then the reduced Poisson bracket is given by $\{h,f\}_R(\kb) = \{\Hop, \Fop\}(K)$, where $K$ is the Maurer-Cartan matrix defining the generating invariants $\kb$. The infinitesimal condition usually allows us to solve explicitly for $\var\Hop$.

Perhaps the most interesting part is the direct relation between invariant evolutions and evolutions that are Hamiltonian with respect to the reduction of (\ref{mainbr}). The following theorem can be found in \cite{M1}.
\begin{theorem} Assume an evolution 
\begin{equation}\label{keq}
\kb_t = F(\kb, \kb_x, \kb_{xx},\dots)
\end{equation}
is Hamiltonian with respect to the geometric Poisson bracket obtained when reducing (\ref{mainbr}) to $\K$. Assume $f$ is its Hamiltonian functional and $\F$ is an extension constant on the $\L H$-leaves along $\K$. Assume $\rho= \rho_{-1}\rho_0\rho_1$ according to a $|1|$-grading of the algebra and assume we can identify $\rho_{-1}$ with a nondegenerate curve $u$ in $G/H$. Let $\r = \left(\var\F(K)\right)_{-1}$. Then 
\begin{equation}\label{groupev}
\rho_{-1}^{-1}\left(\rho_{-1}\right)_t = Ad(\rho_0)\r
\end{equation}
is a geometric realization for (\ref{keq}).
\end{theorem}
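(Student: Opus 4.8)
The plan is to run the curve flow (\ref{groupev}), compute the evolution it forces on the Maurer--Cartan matrix $K=\rho^{-1}\rho_x$ through the zero--curvature relation, and then recognize that evolution --- read modulo the $\Lo H$--leaf directions, i.e.\ as an evolution of the generating invariants $\kb$ --- as exactly the reduced Hamiltonian flow of $f$. Throughout I would use the trace identification $\g\cong\g^\ast$, under which the Hamiltonian operator of (\ref{mainbr}) is the map $\mathcal P_K\colon\xi\mapsto\xi_x+\mathrm{ad}(K)\xi$, the infinitesimal affine action of $\Lo H$ on $\Lo\g^\ast$ is $\xi\mapsto\mathcal P_K(\xi)$, so the $\Lo H$--leaf through $K$ has tangent space $V_K=\mathcal P_K(\Lo\h)$, and condition (\ref{Hcond}) on a leaf--constant extension reads $\mathcal P_K(\var\F)\in\h^{0}$.

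First I would observe that (\ref{groupev}) with $\r=(\var\F(K))_{-1}$ is a genuine evolutionary PDE for $u$: $\r$ is a differential function of the invariants and $\rho_0$ is built from $u$ and its $x$--jet, so $u_t$ is an invariant differential expression in $u$ --- an invariant curve evolution, the general $|1|$--graded form of (\ref{invev}), cf.\ \cite{M1}. Solving it for $t$ near $0$ (where $u$ stays nondegenerate) gives $u(t,x)$; running the same normalization at each $t$ yields $\rho(t,x)=\rho_{-1}\rho_0\rho_1$ with $K(t,\cdot)\in\K$ for all $t$. Set $N=\rho^{-1}\rho_t$. The step carrying the geometric content is the identification $N_{-1}=\r$: expanding $N=Ad(\rho_1^{-1})\r+Ad(\rho_1^{-1})(\rho_0^{-1}(\rho_0)_t)+\rho_1^{-1}(\rho_1)_t$ and using that the grading has length one, the last two terms lie in $\g_0\oplus\g_1$, while $Ad(\rho_1^{-1})\r=\r+(\text{terms in }\g_0\oplus\g_1)$ because $\r\in\g_{-1}$; hence $N_{-1}=\r=(\var\F(K))_{-1}$. (This is the ``$N$ has $\g_{-1}$--block equal to $\r$'' already used in the proof of Theorem~\ref{levelset}, cf.\ \cite{M1}.)

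Next, compatibility of $\rho_x=\rho K$ and $\rho_t=\rho N$ (valid since $\partial_x,\partial_t$ commute) gives the zero--curvature relation $K_t=N_x+[K,N]=\mathcal P_K(N)$. On the other hand, since $\F$ is a leaf--constant extension of $f$, condition (\ref{Hcond}) gives $\mathcal P_K(\var\F)\in\h^{0}$, and by the description of the reduced bracket ($\{h,f\}_R(\kb)=\{\Hop,\F\}(K)$, the reduction of (\ref{mainbr}), cf.\ \cite{M1}) the reduced Hamiltonian vector field of $f$ at $\kb$ is represented, modulo $V_K$, by $\mathcal P_K(\var\F)$; by hypothesis that vector field is $F$. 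The conclusion is then a one--line subtraction: by the previous step $N-\var\F$ has vanishing $\g_{-1}$--component, so $N-\var\F\in\Lo\h=\Lo(\g_0\oplus\g_1)$, hence $\mathcal P_K(N-\var\F)\in\mathcal P_K(\Lo\h)=V_K$; therefore $K_t=\mathcal P_K(N)\equiv\mathcal P_K(\var\F)\pmod{V_K}$. Since $\K$ is locally a slice transverse to the $\Lo H$--orbits, $\mathcal P_K(N)$ and the reduced Hamiltonian field of $f$, being congruent mod $V_K$ and both tangent to $\K$, coincide; that is, the evolution induced by (\ref{groupev}) on $\kb$ is exactly $\kb_t=F$, so (\ref{groupev}) is a geometric realization of (\ref{keq}).

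The hard part, and where the $|1|$--grading is used essentially, is the identification $N_{-1}=\r$: for a grading of length greater than one the adjoint actions of $\rho_0$ and $\rho_1$ would feed back into the $\g_{-1}$--slot and $N_{-1}=\r$ would fail. The one point demanding care --- where a naive argument slips --- is not to conflate ``the evolution of $K$'' with ``the evolution of $\kb$'': these agree only modulo $V_K$, and it is precisely the gauge leftover $N-\var\F\in\Lo\h$ that the quotient by $\Lo H$ kills.
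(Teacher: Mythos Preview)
The paper does not prove this theorem; it simply records it as a result ``found in \cite{M1}''. Your argument is correct and is in fact the mechanism underlying that reference: compute $N=\rho^{-1}\rho_t$, use the $|1|$--grading to see $N_{-1}=\r=(\var\F(K))_{-1}$, invoke the zero--curvature relation $K_t=N_x+[K,N]$, and then observe that $N-\var\F\in\Lo\h$ so that $K_t$ and the Hamiltonian vector field of $\F$ differ by an element of the $\Lo H$--leaf direction $V_K$; since $K_t$ is tangent to the slice $\K$ (the normalization is redone at each $t$), it must equal the reduced Hamiltonian field of $f$. Your identification $N_{-1}=\r$ is exactly what the paper itself uses, without proof, in the argument for Theorem~\ref{levelset}, and your remark that this step genuinely needs the grading to have length one is on point. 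The only cosmetic caveat is a sign: with the bracket as written in (\ref{mainbr}) and the trace pairing, the Hamiltonian vector field of $\F$ is $-\mathcal P_K(\var\F)$ rather than $+\mathcal P_K(\var\F)$, so strictly speaking the flow realizes the Hamiltonian evolution with functional $-f$ (equivalently, the bracket with arguments in the opposite order). This is a convention issue, not a gap.
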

This direct relation, together with Theorem \ref{levelset}, tells us that the reduction of (\ref{mainbr}) cannot be further restricted to the level set $\k3 = \k4 = 0$. On the other hand, 
such a reduction was possible for the conformal sphere of signature $(n,0)$, and, when reduced, one would obtain
BiHamiltonian structures for a complexly coupled system of KdV equations on $\k1$ and $\k2$ (see \cite{M3}). The level set $\k3=\k4 = 0$ was also preserved by invariant evolutions whenever $\r$ was in the same normal form as $S(u)$. To achieve this restriction in the conformal case, one needed to use {\it natural conformal moving frames}. In our next section we study the local geometry of conformal curves on the sphere of signature $(2,2)$, followed by the definition of natural frames. Natural moving frames are easier to understand geometrically in the conformal picture than in the Grassmannian one, although they correspond  algebraically, so our strategy is to shift the geometric knowledge we have in the conformal case to the Grassmannian one. 

\section{The local geometry of conformal curves}
 In this section we will describe the analogous information presented in the previous section for the case of the conformal M\"obius sphere of signature $(2,2)$. Although the case of the conformal M\"obius sphere of signature $(n,0)$ was studied in \cite{M2}, the change of signature and the need to relate it to the Grassmannian case urge us to use normalization equations, a different approach from that in the original paper. The normalization constants will be matched to the ones used for the Grassmannian case, while trying to reproduce the results in \cite{M3} for the general conformal case.  Although the
 isomorphism $\Phi$ in section 2 guarantees our process, we need to have explicit descriptions to relate it to the natural frame. 

 Following the gradation of $\so(3,3)$ we described in our second section, an element $g\in\SO(3,3)$ can be locally factored as
 \[
 g = \begin{pmatrix} 1&0&0\\ z & I & 0\\ -\frac12 \norm{z}^2& -z^t J & 1\end{pmatrix} \begin{pmatrix} \alpha & 0 & 0\\ 0&\Theta& 0 \\ 0&0&\alpha^{-1}\end{pmatrix}\begin{pmatrix} 1&-y^t J & -\frac12 \norm{y}^2\\ 0&I&y\\ 0&0&1\end{pmatrix}
 \]
 where $\alpha \in \R$, $\Theta \in O(2,2)$, $O(2,2)$ is 
represented as the group that preserves 
 \[
 J= \begin{pmatrix} 0&0&0&1\\0&0&1&0\\0&1&0&0\\1&0&0&0\end{pmatrix},
 \]
  and where $\norm{v}^2 = v^tJv$ (the notation is deceiving since it can be negative). We will also denote $\langle v, w\rangle = v^tJw$. If we identify $\SO(3,3)/P_C$ with the section $\alpha = 1, z = 0, \Theta = I$, that is, with
  \[
  g_u = \begin{pmatrix} 1&-u^t J & -\frac12 \norm{u}^2\\ 0&I&u\\ 0&0&1\end{pmatrix}
  \]
  then $g\cdot u$ is completely determined by the relation
  \[
  g g_u = g_{g\cdot u} h
  \]
  with $h \in P_C$. The relation uniquely determines
  \begin{equation}\label{confact}
  g\cdot u = \frac{\Theta(u+y)-\frac\alpha2\norm{u+y}^2z}{\alpha^{-1}-z^tJ\Theta(u+y)+\frac\alpha4\norm z^2\norm{u+y}^2}.
  \end{equation}
  \subsection{Group-based  moving frame for conformal generic curves}
  Let us denote denominator and numerator by $g\cdot u = \frac F M$. The normalization process below uses normalization constants that are coordinated with those of the Grassmannian. Consider the vectors $e_1 = (1,0,0,1)^t$, $e_2 = (0,1,1,0)^t$, $e_3 = (0,1,-1,0)^t$, $e_4 = (1,0,0,-1)^t$ so that $\norm{e_1}^2=\norm{e_2}^2 = -\norm{e_3}^2=-\norm{e_4}^2 = 2$. 
  
{\it Zeroth normalization equation}. This equation is given by 
\[
g\cdot u = i_0 = 0
\]
which can be readily solved with the choice $u+y = 0$ or $y = -u$.

{\it First normalization equation}. Since $g\cdot u_1 = \frac{F_1}M - \frac F M \frac{M_1} M$. when substituting $g\cdot u = \frac F M = 0$ we get that the first normalization equation is
\[
g\cdot u_1 = \frac{F_1} M = \alpha \Theta u_1 = i_1 = e_3.
\]
This determines $\alpha^2\norm{u_1}^2= -2$ and imposes conditions on $\Theta$. If $\norm{u_1}^2>0$ (notice the abuse of notation since $\norm{}^2$ can be negative), then this choice is not possible and both Grassmannian and conformal cases will need to be renormalized to adjust to the situation (by, for example, choosing $e_2=i_1$ and $i_1 = \begin{pmatrix} -1&0\\0&1\end{pmatrix}$  in the Grassmannian case). There is no problem doing so and we obtain an analogous result.

{\it Second normalization equation}. Since 
\[
g\cdot u_2 = \frac{F_2}M-2\frac{F_1}M\frac{M_1}M-\frac FM\left(\frac{M_2}M-2\frac{M_1^2}{M^2}\right)
\]
after substituting previous normalizations we obtain
\[
g\cdot u_2 = \alpha \Theta u_2 + 2z + 2(z^tJe_3)e_3 = i_2 = 0.
\]
This choice of $i_2$ allows us to solve for $z$ in terms of $\Theta$ (if $z = (z_1,z_2,z_3,z_4)^t$, then $(z_1,z_3,z_2,z_4)^t = -\frac12 \alpha\Theta u_2)$.
With this value for $z$, one can check directly that $\norm z^2 = \frac14\norm{u_2}^2 - \frac{\alpha^2}4\langle u_1,u_2\rangle^2$ and $z^tJe_3 = \frac \alpha 2\langle u_1,u_2\rangle$.

{\it Third normalization equation}. Given that
\[
g\cdot u_3 = \frac{F_3}M-3\frac{M_1}M\left(\frac{F_2}M-2\frac{F_1M_1}{M^2}\right)-3\frac{M_2}M\frac{F_1}M-\frac FM\left(\frac{M_3}M-6\frac{M_2M_1}{M^2}+6\frac{M_1^3}{M^3}\right)
\]
substituting previous normalizations yields the equation
\[
\alpha \Theta u_3-3\alpha^2\langle u_1,u_2\rangle z-6(z^tJe_3)^2e_3-3\norm{z}^2e_3 = i_3.
\]
If we write $z$ and $e_3$ in terms of $\Theta$ using previous normalization equations, we
arrive to the third normalization equation
\begin{equation}\label{G3}
\alpha \Theta\left(u_3-3\frac{\langle u_1,u_2\rangle}{\norm{u_1}^2}u_2+\frac32\frac{\norm{u_2}^2}{\norm{u_1}^2}u_1\right) = \alpha \Theta G_3 = i_3
\end{equation} 
where the vector $G_3$ is defined uniquely by the equation.

As expected, this equation has rank 2. In fact,
\begin{equation}\label{i1}
\langle i_3, i_1\rangle = -2\left(\frac{\langle u_1,u_3\rangle}{\norm{u_1}^2}-3\frac{\langle u_1,u_2\rangle^2}{\norm{u_1}^4}+\frac32\frac{\norm{u_2}^2}{\norm{u_1}^2}\right) = -2 I_1
\end{equation}
(we define $I_1$ through this equation) and
$
\norm{i_3}^2 = -2(I_2 + I_1^2)
$
where 
\begin{equation}\label{i2}
I_2 = \frac{\langle u_3,u_3\rangle}{\norm{u_1}^2}-6\frac{\langle u_1,u_2\rangle\langle u_2,u_3\rangle}{\norm{u_1}^4}-\frac{\langle u_1,u_3\rangle^2}{\norm{u_1}^4}+6\frac{\langle u_1,u_2\rangle^2\langle u_1,u_3\rangle}{\norm{u_1}^6}+9\frac{\langle u_1,u_2\rangle^2\langle u_2,u_2\rangle}{\norm{u_1}^6}-9\frac{\langle u_1,u_2\rangle^4}{\norm{u_1}^8}.
\end{equation}
Both $I_1$ and $I_2$ appeared in \cite{M2} for the case of signature $(n+1,1)$
and they were called in \cite{M3} {\it differential invariants of Schwarzian or projective type}. If $\phi$ is a change of parameter, one can check that 
\[
\phi^\ast I_1 = (\phi')^2 I_1\circ \phi + S(\phi), \hskip 2ex \phi^\ast I_2 = (\phi')^4 I_2\circ \phi 
\]
where $S(\phi)$ denotes the Schwarzian derivative of $\phi$. The invariant $I_2^{1/4}$ is used to define the conformal arc-length.

Now we need to make a choice for $i_3$. The value of $i_3$ in the Grassmannian case depends on whether the Schwarzian derivative of $u$ has real or complex eigenvalues. Here we will make two different choices to match the Grassmannian choices. We will also call them the real and complex cases, although this relation is only apparent through its connection to the Grassmannian.

{\it Real case}. In this case we will choose
\begin{equation}\label{reali3}
i_3 = \begin{pmatrix} 0\\\ast\\\ast\\0\end{pmatrix}.
\end{equation}
That is, we will ask $\alpha \Theta G_3$ to lie on the plane generated by $e_2$ and $e_3$. Assume $i_3 = \hat k_1e_2+\hat k_2 e_3$. Given that $\alpha \Theta u_1 = e_3$, this will determine the value of $\Theta^{-1}e_3$ and that of $\Theta^{-1}e_2$ (with $\Theta \in O(2,2)$ being guarantee by the entries that are not normalized). We need to find one more piece of information about $\Theta^{-1} e_1$ or $e_4$ to completely determine $\Theta$. We can accomplish that in the fourth normalization equation, where only one more equation needs to be normalized.

{\it Complex case}. In the complex case we will choose
\[
i_3 = \hat k_1 e_3 + \hat k_2 e_4
\]
so that we will have determined $\Theta^{-1} e_3$ and $\Theta^{-1} e_4$. Again, one more normalization in the fourth order equation will completely determine $\Theta$.

{\it Fourth normalization equation}. As done with the third equation, we will need to differentiate once more and substitute the previous normalizations. Details are dull and do not add information, so we will leave them up to the interested reader. After substituting the values for $z$ and $e_3$ in terms of $\alpha \Theta$, once more the fourth normalization equation will look like 
\[
\alpha \Theta G_4 = i_4
\]
where $G_4$ is a vector depending exclusively on derivatives of $u$. 

{\it Real case}. In this case we will ask $i_4$ to belong to the subspace generated by $e_2$, $e_3$ and $e_4$; this forces the coefficient of $e_1$ to vanish, which is the last normalization equation we need to use to complete the determination of the moving frame $\rho$. Notice that these choices can be accomplished generically (to be sure, we would need to find  $G_4$, which is straightforward but long and space consuming so we will not include it in the paper).

{\it Complex case}. In this case we will ask for the same condition on $i_4$, which will also imply that the $e_1$ coefficient of $\Theta G_4$ will vanish. This normalization determines $\Theta^{-1} e_2$ additionally to $\Theta^{-1} e_3$ and $\Theta^{-1} e_4$, so the frame is now completely determined.

 \subsection{Grassmannian Maurer-Cartan matrix associated to the moving frame} In this section we will use Proposition \ref{horeq} to find the explicit form of the Maurer-Cartan matrix K associated to the moving frame $\rho$ that we just found. Let $v \in \so(3,3)$ and assume we split it as
 \begin{equation}\label{split}
 v = \begin{pmatrix} v_\alpha& -v_{-1}^t J& 0 \\ v_1&v_0&v_{-1}\\ 0&-v_1^tJ&-v_\alpha
 \end{pmatrix}.
\end{equation}
Then, the prolonged zero, first, second and third order infinitesimal action of the algebra on the manifold are given by
\begin{eqnarray*}
v\cdot u &=& v_0\cdot u-\frac12 v_1\norm{u}^2+v_{-1},\\
v\cdot u_1 &=& v_0 u_1+v_\alpha u_1+R_1,\\
v\cdot u_2 &=& v_\alpha u_2+ v_0u_2-\norm{u_1}^2v_1+2 u_1v_1^tJu_1 + R_2,\\
v\cdot u_3 &=& v_\alpha u_3+v_0u_3-3\langle u_1,u_2\rangle v_1+3(v_1^tJu_1)u_2+3(v_1^tJu_2)u_1 +R_3,
\end{eqnarray*}
where $R_i$ all contain terms that vanish as $u$ vanishes. Let $K$ be the Maurer-Cartan matrix in the conformal case and assume we split $K$ as done in (\ref{split}). Using these equations for the infinitesimal action in Proposition \ref{horeq} we get the following  information on $K$
\begin{eqnarray*}
K\cdot i_0 &=& K_{-1} = i_1= e_3,\\
K\cdot i_1 &=& K_0 i_1+K_\alpha i_1 = (K_0+K_{\alpha})e_3 = 0,\\
K\cdot i_2 &=& 2K_1-2(K_1^tJe_3) e_3 = i_3,\\
K\cdot i_3 &=& K_0 i_3+K_\alpha i_3 = i_4-(i_3)_x.\\
\end{eqnarray*}
The first equation determines $K_{-1}$ and the second tells us that $K_\alpha = 0$ and 
\begin{equation}\label{K_0}
K_0 = \begin{pmatrix} a&b&b&0\\c&0&0&-b\\c&0&0&-b\\0&-c&-c&-a\end{pmatrix}
\end{equation}
where the entries still need to be determined. The last equation will determine the value of $K_1$, depending on whether we are in a real or a complex case, as expected. In the real case, 
if $i_3 = \hat k_1 e_2+\hat k_2 e_3$, then 
\[
K_1 = \begin{pmatrix} 0\\k_1\\k_2\\0\end{pmatrix}
\]
where $k_1 = \frac12(\hat k_1-\hat k_2)$ and $k_2 = \frac12(\hat k_1+\hat k_2)$.

In the complex case, if $i_3 = \hat k_1 e_3+\hat k_2 e_4$, then
\[
K_1 = \begin{pmatrix} -k_2\\k_1\\-k_1\\k_2\end{pmatrix}
\]
where $k_i = -\frac12 \hat k_i$. These are combinations of the classical invariants $I_1, I_2$.

Our last recurrence relation completely determines $K_0$. Indeed, in the real case $K_0 i_3 = i_4-(i_3)_x$ implies that $K_0 i_3$ has vanishing $e_1$ component. If $K_0$ is given as in (\ref{K_0}), this condition implies $c = b= k_4$ and $a = k_3$. In the complex case, $K_0 i_3 = i_4-(i_3)_x$ implies also that the $e_1$ component of $K_0 e_3$ vanishes, which in this case implies $a = 0$. Notice that one can go back in our calculations and obtain an explicit formula for these invariants, if needed. We just proved the following theorem.
\begin{theorem}\label{MauCmat} Let $\rho$ be the left conformal moving frame found in the previous subsection. Then $\rho^{-1} \rho_x = K$ where $K$ is of the form
\[
K= \begin{pmatrix} 0&0&1&-1&0&0\\0&k_3&k_4&k_4&0&0\\k_1&k_4&0&0&-k_4&1\\ k_2&k_4&0&0&-k_4&-1\\0&0&-k_4&-k_4&-k_3&0\\ 0&0&-k_2&-k_1&0&0\end{pmatrix}
\]
in the real case, and
\[
 K= \begin{pmatrix} 0&0&1&-1&0&0\\-k_2&0&k_3&k_3&0&0\\k_1&k_4&0&0&-k_3&1\\ -k_1&k_4&0&0&-k_3&-1\\k_2&0&-k_4&-k_4&0&0\\ 0&-k_2&k_1&-k_1&k_2&0\end{pmatrix}
 \]
 in the complex case. The invariants $k_1$ and $k_2$ generate all differential invariants of third order, while $k_3$ and $k_4$ generate independent invariants of higher order. Explicit algebraic formulas can be found for each one of the invariants in terms of derivatives of $u$. 
\end{theorem}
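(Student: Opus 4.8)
The plan is to run the argument through the horizontal recurrence formula of Proposition~\ref{horeq}, $K\cdot i_s=i_{s+1}-(i_s)_x$, reading off the gradation blocks of $K$ one differential order at a time from the normalization data $i_0=0$, $i_1=e_3$, $i_2=0$ and the chosen shapes of $i_3$ (together with the partial information on $i_4$) recorded in the previous subsection. I would write $K$ according to the splitting (\ref{split}) of $\so(3,3)$, so that $K$ is carried by the scalar $K_\alpha$, the vectors $K_{-1},K_1\in\R^4$ and $K_0\in\oa(2,2)$, and then substitute into the recurrence the explicit prolonged infinitesimal actions $v\cdot u,\dots,v\cdot u_3$ displayed just above the theorem, evaluated at the frame, where $u=0$, $u_1=e_3$, $u_2=0$, $u_3=i_3$; the only feature of the remainder terms $R_i$ that matters is that this evaluation annihilates them.

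Carrying this out, the case $s=0$ gives $K_{-1}=i_1=e_3$, which fixes the outer blocks of $K$. The case $s=1$ gives $(K_0+K_\alpha)e_3=0$; expanding a general element of $\oa(2,2)$ and requiring that $e_3=(0,1,-1,0)^t$ be an eigenvector of $K_0$ with eigenvalue $-K_\alpha$ forces $K_\alpha=0$ and cuts $K_0$ down to the three-parameter shape (\ref{K_0}). The case $s=2$ yields $2K_1-2(K_1^tJe_3)e_3=i_3$, and solving this against $i_3=\hat k_1e_2+\hat k_2e_3$ in the real case, resp.\ against $i_3=\hat k_1e_3+\hat k_2e_4$ in the complex case, gives the stated $K_1$, with $k_1,k_2$ the indicated linear combinations of $\hat k_1,\hat k_2$ --- hence, through (\ref{i1}) and (\ref{i2}), explicit combinations of $I_1,I_2$, so third order invariants. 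Finally $s=3$ gives $K_0i_3=i_4-(i_3)_x$; since $i_4$ was normalized to have vanishing $e_1$-coefficient and $(i_3)_x$ has no $e_1$-component either, the only new information is the vanishing of the $e_1$-coefficient of $K_0i_3$, which imposes $b=c$ in the real case and $a=0$ in the complex case. Renaming the two surviving parameters of $K_0$ as $k_3,k_4$ then pins down $K$ in exactly the two displayed forms. The generation statements follow from the standard moving-frame results of \cite{FO} and \cite{H}: the entries of $K$ generate all differential invariants; the third order invariants are precisely those extractable from $i_3$, i.e.\ the functions of $k_1,k_2$; and $k_3,k_4$ are the remaining independent generators, of higher order. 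Explicit algebraic formulas for all four are then obtained by continuing the recurrence (or, alternatively, by transporting the Grassmannian matrices (\ref{grassK})--(\ref{grassKc}) through the Lie algebra isomorphism induced by $\Phi$ with matched normalizations).

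The difficulty here is bookkeeping rather than conceptual: the real work is to push the prolongation of the conformal action (\ref{confact}) cleanly to third order, to verify that at the frame the remainder terms $R_i$ genuinely vanish, and to confirm that the vanishing of the $e_1$-coefficient of $i_4$ is the only fourth-order normalization still available --- equivalently, that the rank of the prolonged action is exactly accounted for at each stage, in the sense of \cite{FO}. The single genuinely geometric ingredient is the genericity needed for the third- and fourth-order normalization equations to be solvable near the basepoint (nonvanishing of the relevant discriminant, resp.\ of $\hat k_1$), which is where the hypothesis that the curve is in generic position is used.
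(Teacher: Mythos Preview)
Your proposal is correct and follows essentially the same route as the paper: both apply the recurrence of Proposition~\ref{horeq} to the prolonged infinitesimal actions at the frame, extracting $K_{-1}$, then $K_\alpha=0$ and the shape~(\ref{K_0}) of $K_0$, then $K_1$ from $i_3$, and finally the single relation on $K_0$ from the vanishing $e_1$-component of $i_4$. Your write-up is if anything slightly more careful than the paper's in flagging the rank and genericity issues; the only minor slip is that in the complex case the nonvanishing hypothesis needed at the fourth-order step is $\hat k_2\neq 0$ rather than $\hat k_1$.
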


 \section{Natural frames for conformal and Grassmannian curves}
 
 The concept of a natural frame appeared in \cite{B}. The idea of the author was that, although widely used, classical Serret-Frenet frames in Riemannian geometry were, by no means, the only way to frame Riemannian curves, and often not the most convenient one. The author of the article showed that one could, for example, find a classical moving frame (which he called {\it natural moving frame}) for which the derivative of any of the vectors in the frame, other than the unit tangent, will be in the tangential direction (thus, it is often called the {\it parallel frame}). Physically, the moving frame does not record any rotational movement on the normal plane. That is, if $\nu = (T, T_1, \dots T_{n-1})$ is a classical Serret-Frenet moving frame, $T$ the unit tangent, then
 \[
 \nu_x = \nu \begin{pmatrix} 0&-\kappa_1&0&\dots& 0\\\kappa_1&0&-\kappa_2&\dots&0\\ \vdots &\ddots&\ddots&\ddots&\vdots\\0&\dots&\kappa_{n-2}&0&-\kappa_{n-1}
 \\ 0&\dots&0&\kappa_{n-1}&0\end{pmatrix}
 \]
 while if $\eta = (T, N_1, \dots N_{n-1})$ is the natural frame, then
 \[
 \eta_x = \eta \begin{pmatrix} 0&-k_1&-k_2&\dots& -k_{n_1}\\k_1&0&0&\dots&0\\ \vdots &\vdots&\vdots&\vdots&\vdots\\ k_{n-1}&0&0&\dots&0\end{pmatrix}
 \]
 A main characteristic of this natural frame is that their vectors are {\it non-local}, that is, they cannot be written as an algebraic expression of derivatives of the curve. In fact, one needs to solve a linear differential equation to find the natural frame.
 
 The concept of Riemannian natural moving frame was translated into the conformal picture in \cite{M2}. In the case of the M\"obius sphere of signature $(n+1,1)$ one could find a classical moving frame $(T,T_1,\dots,T_{n-1})$ using a normalizing Riemannian-type process (see \cite{M3}). In this process, and unlike the Riemannian case, $T$ had no role and the role of generating the rest of the frame though differentiation was carried out by $T_1$ instead of the tangent ($T_1$ is a vector of differential order 3 related to $G_3$ in (\ref{G3})). In the conformal case the author of \cite{M3} defined the classical {\it conformal natural moving frame} to be the frame for which the derivatives of all vectors other than $T$ and $T_1$ are conformally in the direction of $T_1$. In terms of the group-based Maurer-Cartan matrix, if $\rho_C$ is the moving frame associated to the classical Serret-Frenet frame $(T,T_1,\dots,T_{n-1})$ as in \cite{M3} and $\rho_N$ is the one associated to $(T,N_1,\dots,N_{n-1})$, the natural frame, then
 \[
 (\rho_C)_x = \rho_C\begin{pmatrix} 0&1&0&0&0&\dots&0\\k_1&0&0&0&\dots&0&1\\ k_2&0&0&-k_3&0&\dots&0\\
 0&0&k_3&0&-k_4&\dots&0\\ \vdots&\vdots&\ddots&\ddots&\ddots&\vdots&\vdots\\0&0&\dots&\dots&0&-k_n&0\\
 0&0&\dots&0&k_{n}&0&0\\ 0&k_1&k_2&0&\dots&0&0\end{pmatrix}
 \]
 while
 \begin{equation}\label{confn0}
 (\rho_N)_x = \rho_N\begin{pmatrix} 0&1&0&0&0&\dots&0\\k_1&0&0&0&\dots&0&1\\ k_2&0&0&-\kappa_3&\dots&-\kappa_n&0\\
 0&0&\kappa_3&0&0&\dots&0\\ \vdots&\vdots&\vdots&\vdots&\vdots&\vdots&\vdots\\
 0&0&\kappa_n&0&\dots&0&0\\ 0&k_1&k_2&0&\dots&0&0\end{pmatrix}.
 \end{equation}
 As in the Riemannian case, the conformal natural frame is non-local and one needs to solve a linear differential equation to find it. More interestingly, the author of \cite{M3} showed that if the geometric Poisson bracket is to be written using a classical Serret-Frenet equation, the Poisson structure does not restrict to the submanifold $k_i=0$, $i=3,\dots$, while when written in terms of natural ones the restriction to $\k i=0$, $i=3,\dots$, can be carried out to obtain a complexly coupled system of KdV structures. The restriction is biHamiltonian and one indeed obtains a geometric realization of a complexly coupled system of KdV equations for $k_1$ and $k_2$ associated to this restriction. Our hope is that, once we find the equivalent definition of natural frame for our conformal and Grassmannian manifolds, we will also be able to obtain restrictions and realizations for both cases.
 
The definition of natural frames is not well-suited to our algebraic approach of describing moving frames. Therefore, we will try to describe natural frames algebraically in a way that can be  readily applied to our situation. Notice that, if one has a left group-based moving frame, the factor of the frame that acts linearly on the manifold (in our case $\rho_0$) represents a classical frame (see \cite{M1}). The map $M \to M$ taking $u$ to $\rho_0\cdot u$ is linear and so represented by an element of $\GL(n,\R)$. That element has in its columns a classical moving frame. As shown in \cite{M2}, the way these elements behave under differentiation will be reflected in $K_0$, according to the gradation.

Let's first have a good look at the conformal $(n+1,1)$ case, the normalization constants for the frame $\rho_N$ in (\ref{confn0}) are $i_1 = (1,\dots,0)^t$ and $i_3 =(k_1,k_2,0,\dots,0)^t$ (they correspond to the $K_{-1}$ and $K_1$ components of $K$, respectively). If $\Theta$ is the analogous to ours for the $(n+1,1)$ signature, this choice of constant implies that $\Theta u_1 = \norm{u_1}^2 e_1$ and $\Theta G_3 = k_1 e_1+k_2 e_2$; that is, the first column of the classical moving frame $\alpha^{-1} \Theta^{-1}$ will be the unit tangent and the second a combination of the tangent with $G_3$. The fact that all other derivatives  are in the direction of the second vector is reflected in the form of $K_0$, namely
\[
K_0 = \begin{pmatrix}0&0&0&\dots&0\\ 0&0&-\kappa_3&\dots&-\kappa_n\\ 0&\kappa_3&0&\dots&0\\
\vdots&\vdots&\vdots&\dots&\vdots\\ 0&\kappa_n&0&\dots&0\end{pmatrix}.
\]
Algebraically, $K_0 e_i$ is a multiple of $e_2$ for any $i=3,\dots$. 

In our signature $(2,2)$ conformal sphere $i_1 = e_3$ and, for the real case $i_3= \hat k_1 e_3+\hat k_2 e_2$ and for the complex case $i_3= \hat k_1 e_3+\hat k_2 e_4$. Therefore, in the real case we will say a moving frame is natural if its associated matrix $K_0$ satisfies $K_0 e_3 = 0$ and $K_0 e_i$ is a multiple of $e_2$ for $i=1,4$. In the complex case, we substitute $e_2$ with $e_4$. Condition $K_0 e_3 = 0$ implies 
\[
K_0 = \begin{pmatrix} a&b&b&0\\c&0&0&-b\\c&0&0&-b\\0&-c&-c&-a\end{pmatrix}
\]
while 
\[
K_0 e_1 = \begin{pmatrix}a\\ c-b\\ c-b\\-a\end{pmatrix}, \hskip 2ex K_0e_2 = \begin{pmatrix} 2b\\0\\0\\-2c\end{pmatrix}, \hskip 2ex K_0 e_4 = \begin{pmatrix}a\\ c+b\\ c+b\\a\end{pmatrix}.
\]
From here, in the real case we need $a = 0$, while in the complex case we need $b = c$.
\begin{theorem}
In the real case, there exists a conformal moving frame $\rho_N$, one we will call natural moving frame, such that
\[
K_N = \rho_N^{-1}(\rho_N)_x = \begin{pmatrix} 0&0&1&-1&0&0\\ 0&0&\kappa_3&\kappa_3&0&0\\ k_1&\kappa_4&0&0&-\kappa_3&1\\ \kappa_2&\kappa_4&0&0&-\kappa_3&-1\\ 0&0&-\kappa_4&-\kappa_4&0&0\\ 0&0&-k_2&-k_1&0&0\end{pmatrix}.
\]
In the complex case, the natural moving frame also exists and it is given by a matrix of the form
\[
K_N= \rho_N^{-1}(\rho_N)_x = \begin{pmatrix} 0&0&1&-1&0&0\\ 0&\kappa_3&\kappa_4&\kappa_4&0&0\\ k_1&\kappa_4&0&0&-\kappa_4&1\\ \kappa_2&\kappa_4&0&0&-\kappa_4&-1\\ 0&0&-\kappa_4&-\kappa_4&-\kappa_3&0\\ 0&0&-k_2&-k_1&0&0\end{pmatrix}.
\]

\end{theorem}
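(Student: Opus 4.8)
The plan is to obtain $\rho_N$ from the Serret--Frenet conformal frame $\rho$ of Theorem~\ref{MauCmat} by a gauge transformation using precisely the freedom left unused after the normalizations $i_1=e_3$ and $i_3$. I would write $\rho_N=\rho\,g_0$ with $g_0(x)=\mathrm{diag}(1,R(x),1)$, $R(x)\in\O(2,2)$, built from differential invariants (or, if necessary, their integrals); since $g_0$ takes values in the Levi factor of $P_C$ and is invariant under the prolonged action, $\rho_N$ is again a left moving frame for the same curves, with the factor $\rho_{-1}$ unchanged, and $K_N=g_0^{-1}Kg_0+g_0^{-1}(g_0)_x$. Decomposing along the $|1|$-grading as in~(\ref{split}), this reads $K_\alpha\mapsto K_\alpha$, $K_{-1}\mapsto R^{-1}K_{-1}$, $K_1\mapsto R^{-1}K_1$, $K_0\mapsto R^{-1}K_0R+R^{-1}R_x$. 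To avoid disturbing what is already fixed I would take $R$ in the pointwise stabilizer in $\O(2,2)$ of the non-degenerate plane $V$ spanned by the two basis vectors occurring in $i_3$: $V=\langle e_2,e_3\rangle$ in the real case and $V=\langle e_3,e_4\rangle$ in the complex case. That stabilizer is $\O(V^\perp)$, with $V^\perp=\langle e_1,e_4\rangle$ of signature $(1,1)$ in the real case and $V^\perp=\langle e_1,e_2\rangle$ of signature $(2,0)$ in the complex case; its identity component is one-dimensional and abelian. Since $K_{-1}=e_3\in V$ and $K_1\in V$ (a combination of $e_2$ and $e_3$ in the real case, of $e_3$ and $e_4$ in the complex case), both are fixed by $R^{-1}$, so only $K_0$ is changed.

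Next I would analyze $K_0\mapsto R^{-1}K_0R+R^{-1}R_x$ blockwise for $\R^4=V^\perp\oplus V$. The $V$--$V$ block of $K_0$ is already zero and stays zero; the diagonal block $K_0^{V^\perp V^\perp}$ lies in $\so(V^\perp)$, which is abelian, so conjugation by $R\in\SO(V^\perp)$ fixes it and the $V^\perp$--$V^\perp$ block of $K_N$ equals $K_0^{V^\perp V^\perp}+R^{-1}R_x$. Writing $R=R(\theta(x))$, this is the statement that the entry called $a$ in Section~6 transforms by $a\mapsto a+\theta_x$, whereas the off-diagonal blocks (the entries $b$ and $c$) are merely acted on by $R$ and so transform algebraically in $\theta$. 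In the real case I would solve $\theta_x=-a$ (with $a=k_3$) by a single integration; this annihilates $a$, so $K_0e_3=0$ and $K_0e_1,K_0e_4\in\R e_2$ as required, and it is exactly this integration that makes $\rho_N$ non-local, while the boost $R$ splits the single invariant $k_4$ into two distinct ones $\kappa_3,\kappa_4$. In the complex case, where already $a=0$, I would instead choose $\theta$ so that the rotated off-diagonal blocks satisfy the natural condition $b=c$; the new diagonal entry then becomes $\theta_x=:\kappa_3$. In both cases the required choice exists on a generic (open, dense) set of curves, which is all we need since the construction is local.

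With $R$ so chosen, the final step is to substitute back into the $6\times6$ layout of~(\ref{split}): $K_\alpha=0$, $K_{-1}=e_3$ and the $K_1$ component carrying $k_1,k_2$ are unchanged, the $V$--$V$ part of $K_0$ is zero, the $V^\perp$--$V^\perp$ part is zero (real case) or the rotation generator carrying $\kappa_3$ (complex case), and the off-diagonal parts carry $\kappa_3,\kappa_4$ (real, distinct) or the common value $\kappa_4$ (complex). Reading off the entries position by position gives exactly the two matrices $K_N$ in the statement, with $\so(3,3)$-skewness serving as a built-in consistency check.

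The step I expect to be the real obstacle is the blockwise bookkeeping in the complex case: there $V^\perp=\langle e_1,e_2\rangle$ is not a coordinate plane for the standard basis in which the $a,b,c$ form of $K_0$ was recorded, so one must first pass explicitly between standard coordinates and the $\{e_i\}$-adapted splitting $V^\perp\oplus V$ before the ``abelian conjugation plus additive $R^{-1}R_x$'' picture applies cleanly, and then translate the resulting $K_N$ back. Everything else --- recognizing the residual gauge group as $\O(V^\perp)$, checking that $K_\alpha$, $K_{-1}$ and $K_1$ are untouched, and solving the (trivial, in the real case) first-order equation for the single gauge parameter --- is short and formal.
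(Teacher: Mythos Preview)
Your approach is essentially the paper's: both gauge the Serret--Frenet frame $\rho$ by an element $g_0=\mathrm{diag}(1,R,1)$ with $R$ in the one--parameter subgroup of $\O(2,2)$ fixing the plane $V$ spanned by the vectors appearing in $i_3$, and solve a first--order ODE for the single parameter. The paper simply writes down the resulting $R$ explicitly (a hyperbolic scaling $\mathrm{diag}(\gamma,1,1,\gamma^{-1})$ with $\gamma=e^{-\int k_3}$ in the real case, and a specific $4\times4$ matrix built from $\tan(\tfrac12\!\int(k_3+k_4))$ in the complex case) and says ``one can directly check''; your stabilizer/block picture is the conceptual explanation of why those particular matrices work.

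There is, however, one concrete mix--up in your complex--case bookkeeping. Passing to the basis $\{e_1,e_2,e_3,e_4\}$ and writing $K_0$ in the parametrization $(a,b,c)$ of Section~6, one finds
\[
K_0e_1=(c-b)e_2+ae_4,\quad K_0e_2=(b-c)e_1+(b+c)e_4,\quad K_0e_4=ae_1+(b+c)e_2.
\]
Thus with $V^\perp=\langle e_1,e_2\rangle$ the $V^\perp$--$V^\perp$ block is governed by $b-c$, while the off--diagonal $V$--$V^\perp$ blocks carry $a$ and $b+c$. Consequently the additive shift $R^{-1}R_x$ changes $b-c$, not $a$, and the natural condition $b=c$ is obtained by solving the \emph{differential} equation $\theta_x=b-c=k_3-k_4$ (this is what makes the complex natural frame non--local), while the rotation by $\theta$ mixes $(a,b+c)=(0,k_3+k_4)$ algebraically to produce the new $a''=\kappa_3$ and $b''+c''=2\kappa_4$. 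Your sentence ``choose $\theta$ so that the rotated off--diagonal blocks satisfy $b=c$; the new diagonal entry then becomes $\theta_x=:\kappa_3$'' has these two roles reversed. Once that is corrected, your argument goes through verbatim and reproduces the paper's explicit gauge; you even anticipated in your final paragraph that precisely this coordinate translation would be the sticking point.
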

\begin{proof}
Given any two left moving frames $\rho_1$ and $\rho_2$, $\rho_1 = \rho_2g$ with $g$ depending on differential invariants ($g = \rho_2^{-1}\rho_1$). If $K_1$ and $K_2$ are their two Maurer-Cartan matrices, then they are related by the gauge
\[
g^{-1}g_x+g^{-1}K_2g = K_1.
\]
Therefore, we need to find an invariant $g$ gauging $K$ as in Theorem \ref{MauCmat} to $K_N$ above. One can directly check that, in the real case, $\rho_N = \rho g$, where
\[
g = \begin{pmatrix} \gamma &0&0&0\\0&1&0&0\\0&0&1&0\\0&0&0&\gamma^{-1}\end{pmatrix}
\]
and where $\gamma$ is the solution to the differential equation $\gamma _x = -k_3 \gamma$, that is, $\gamma = e^{-\int k_3}$. The invariants $k_3$ and $k_4$ change accordingly into $\kappa_3$ and $\kappa_4$.

In the complex case it is only slightly more complicated:
\[
g = \begin{pmatrix} \gamma&\eta&\eta&\gamma-1\\ -\eta&\gamma&\gamma-1&-\eta\\-\eta&\gamma-1&\gamma&-\eta\\\gamma-1&\eta&\eta&\gamma\end{pmatrix}
\]
with $1+\frac\gamma\eta\frac{\gamma-1}\eta=0$ and $\frac\gamma\eta = -\tan\left(\frac12\int(k_3+k_4)\right)$. Again, the invariants $k_4$ and $k_3$ transform accordingly. 
\end{proof}
These two transformations $k_3, k_4 \to \kappa_3, \kappa_4$ are generalizations of the well-known {\it Hasimoto transformation} for Euclidean geometry to the conformal case.  The Hasimoto transformation was proven to be a map from classical Frenet to natural moving frames (see \cite{LP1}). 

These moving frames can be translated into the Grassmannian picture using the isomorphism. For completeness we will describe natural frames and how to find them in the 
Grassmannian picture also. Notice that, even though in the conformal case one can think of geometric ways to define natural frames, this is far less intuitive and not at all obvious in the Grassmannian manifold. It is an interesting question whether or not this can be done for any plat parabolic manifold.

\begin{theorem} 
In the real case, there exists a left Grassmannian moving frame (the natural frame) such its Maurer-Cartan matrix is given by
\[
K_N = \rho_N^{-1}(\rho_N)_x=\begin{pmatrix} 0&\kappa_3&1&0\\\kappa_4&0&0&1\\ k_1&0&0&\kappa_3\\0&k_2&\kappa_4&0\end{pmatrix}.
\]
In the complex case, the natural moving frame also exists and it satisfies
\[
K_N = \rho_N^{-1}(\rho_N)_x=\begin{pmatrix} \kappa_3&\kappa_4&1&0\\\kappa_4&-\kappa_3&0&1\\ k_1&-k_2&\kappa_3&\kappa_4\\k_2&k_1&\kappa_4&-\kappa_3\end{pmatrix}.
\]

\end{theorem}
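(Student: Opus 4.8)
The plan is to mimic, directly on the Grassmannian side, the gauge argument just used for the conformal natural frame. Recall (as in that proof, and in \cite{M1}) that any two left moving frames $\rho,\rho_N$ over the same curve satisfy $\rho_N=\rho g$ for a matrix $g$ depending only on the differential invariants, and that their Maurer--Cartan matrices are then related by
\[
K_N=g^{-1}g_x+g^{-1}Kg,
\]
with $K$ the Grassmannian Maurer--Cartan matrix (\ref{grassK}) in the real case and (\ref{grassKc}) in the complex case. For $\rho_N$ to remain a frame over the same curve $u$, with section $\rho_{-1}=\begin{pmatrix}I&u\\0&I\end{pmatrix}$, the gauge must lie in the $\g_0$ part of the $|1|$-grading, i.e.\ $g=\begin{pmatrix}h_1&0\\0&h_2\end{pmatrix}$; since the $\g_{-1}$ block of $K$ is the constant $I$, and that of $K_N$ is again $I$, the relation above forces $h_1=h_2=:h\in\GL(2,\R)$ with $(\det h)^2=1$ (also consistent with the equality of the two diagonal blocks of both $K$ and $K_N$). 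Under such a gauge the $\g_{-1}$ block stays $I$, the $\g_1$ block $K_{21}$ becomes $h^{-1}K_{21}h$, and the $\g_0$ block $K_{11}$ becomes $h^{-1}h_x+h^{-1}K_{11}h$, so the task is to choose $h$ making these two expressions equal to the $\g_1$ and $\g_0$ blocks of the claimed $K_N$.

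In the real case the $\g_1$ block of (\ref{grassK}) is diagonal, so it is left fixed by any diagonal $h$; taking $h=\mathrm{diag}(\gamma,\gamma^{-1})$ therefore preserves the two third-order invariants (these are the $k_1,k_2$ of $K_N$) and gives $\det g=1$. Denoting by $a$ the fifth-order invariant (the diagonal entry of $K_{11}$ in (\ref{grassK})), one takes $\gamma$ to be the local solution of the linear ODE $\gamma_x=-a\gamma$, i.e.\ $\gamma=e^{-\int a}$; a short computation then shows that $h^{-1}h_x+h^{-1}K_{11}h$ has vanishing diagonal and off-diagonal entries equal to $\gamma^{-2}$ and to $\gamma^{2}$ times the fourth-order invariant — precisely the two nonlocal functions appearing as $\kappa_3,\kappa_4$ in the stated $K_N$ — so $\rho_N=\rho g$ is the desired natural frame.

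In the complex case the $\g_1$ block of (\ref{grassKc}) has the shape $\begin{pmatrix}\kappa_1&-\kappa_2\\ \kappa_2&\kappa_1\end{pmatrix}$, so to keep that shape — and in fact to leave $\kappa_1,\kappa_2$ unchanged — one takes $h$ in the rotation group $\SO(2)$, i.e.\ commuting with the complex structure $\begin{pmatrix}0&-1\\1&0\end{pmatrix}$ carried by $K_{21}$, and identifies $h$ with a unit complex number $\zeta$. With respect to this complex structure the $\g_0$ block $K_{11}$ of (\ref{grassKc}) decomposes as a $\mathbb C$-linear part carrying the fifth-order invariant $a$ plus a $\mathbb C$-antilinear part carrying the fourth-order invariant. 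Choosing $\zeta$ to solve the linear ODE $\zeta_x=-i\,a\,\zeta$ annihilates the $\mathbb C$-linear part of $h^{-1}h_x+h^{-1}K_{11}h$, while the antilinear part is merely rotated by $\zeta$; the result is exactly the $\g_0$ block of $K_N$, with $\kappa_3,\kappa_4$ the real and imaginary parts of the rotated fourth-order invariant. Writing $\zeta=\cos\theta-i\sin\theta$ turns this into the $\tan$-substitution of the conformal proof.

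All of this is consistent with — and could alternatively be obtained from — transporting the conformal natural frame of the previous theorem through the graded isomorphism $\Phi$ of Section~2, the normalizations having been coordinated for exactly this purpose; the transformations $\kappa_3,\kappa_4\mapsto\kappa_3,\kappa_4$ are then the Grassmannian analogues of the conformal Hasimoto-type transformations. I expect the only genuine work to be in the complex case: identifying correctly the $\mathbb C$-linear versus $\mathbb C$-antilinear parts of the $\g_0$ block of (\ref{grassKc}) relative to the complex structure coming from $K_{21}$, and then checking that the rotation lands on the exact normal form displayed rather than a gauge-equivalent one. This, together with the routine verifications that $g\in\SL(4,\R)$, depends only on the invariants (the arbitrary constant of integration reflecting the known non-uniqueness of natural frames), and preserves the section, finishes the proof.
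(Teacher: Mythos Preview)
Your proposal is correct and follows essentially the same route as the paper: the paper simply invokes the isomorphism $\Phi$ and then records the explicit block-diagonal gauges $g=\mathrm{diag}(\gamma,\gamma^{-1},\gamma,\gamma^{-1})$ with $\gamma=e^{-\int k_4}$ in the real case and the rotation block with $\theta=-\int k_4$ in the complex case, which are exactly the $h$'s you produce (your $a$ being the paper's $\k4$). Your $\mathbb C$-linear/antilinear decomposition of $K_{11}$ in the complex case is a clean way to see why the rotation kills the $\k4 J$ part while merely rotating the $\k3$ part, but it is not a different argument---just a more detailed version of what the paper leaves implicit.
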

We do not need to prove this theorem, it is true due to the existence of the isomorphism. Still, we can explicitly find the relation to the moving frame we previously found. In the real case, $\rho_N = \rho g$ where
\[
g = \begin{pmatrix} \gamma&0&0&0\\0&\gamma^{-1}&0&0\\ 0&0&\gamma&0\\ 0&0&0&\gamma^{-1}\end{pmatrix}
\]
with $\gamma = e^{-\int k_4}$. In the complex case
\[
g = \begin{pmatrix} \cos\theta&-\sin\theta&0&0\\ \sin\theta&\cos\theta&0&0\\ 0&0&\cos\theta&-\sin\theta\\0&0&\sin\theta&\cos\theta\end{pmatrix}
\]
where $\theta = -\int k_4$.

\section{Invariant evolutions and biHamiltonian equations in terms of natural frames: coupled and decoupled systems of KdV equations}
 Finally, in this section we will show that both Hamiltonian structures (\ref{mainbr}) and (\ref{secbr}) reduce to the submanifold of $\K$ defined by $\k3 = \k4 = 0$ to produce a biHamiltonian decoupled system of two KdV structures in the real case, and a biHamiltonian complexly coupled system of KdV equations in the complex case. This result also implies the existence of geometric realizations for these completely integrable systems. These geometric realizations are best understood as a limit case: as $\k3, \k4 \to 0$, the evolution of $k_1$ and $k_2$ become completely integrable. Indeed, $\k3, \k4$, in principle, cannot geometrically vanish, as seen in the previous definition of natural frame (in the real case, $k_3$ will need to blow up for $\k3$ to vanish). This is a surprising geometric realization of KdV systems in the Grassmannian case, but it also shows an unexpected result, namely the existence of geometric realizations of decoupled systems of KdV equations in the conformal case. Only the one for complexly coupled systems of KdV was 
 previously known. This realization can be readily found in a general conformal sphere. For this, we merely need to choose the appropriate normalization $i_3$ as in (\ref{reali3}) in the work found in \cite{M3}. That leads to the appropriate $K_1$-form for the matrix.
 
From now on we will assume that we are working with the natural Maurer-Cartan matrices. To describe the reduction and the reduced brackets we will follow section \ref{Hamiltonian}. We will also assume that we are identifying the dual to the Lie algebra with the Lie algebra itself using the trace. That means that the dual to the entry $a_{ij}$ is the entry $a_{ji}$. In this section both the reduction of the structures, and their relation to geometric realization are explained. Assume we have a Hamiltonian functional $f:\K \to \R$ defined on the space of Grassmannian differential invariants, i.e., in the space $C^\infty(S^1)\times C^\infty(S^1)\times C^\infty(S^1)\times C^\infty(S^1)$. Assume $\F$ is an extension which is constant on the leaves of the subgroup $N$, and
assume 
 \begin{equation}\label{F}
 \frac{\partial \F}{\partial L}(K) = \begin{pmatrix} F_2&F_{-1}\\ F_1&F_3\end{pmatrix}, \hskip 2ex K = \begin{pmatrix} K_2&I\\ K_{1} & K_2 \end{pmatrix}
 \end{equation}

{\it Real case}.
In this case $K_1 = \begin{pmatrix} k_1&0\\0&k_2\end{pmatrix}$ and $K_2 = \begin{pmatrix} 0&\k3\\\k4&0\end{pmatrix}$. If we denote $f_i = \frac{\partial f}{\partial k_i}(\kb)$, then,  
\begin{equation}\label{Fdata}
F_{-1} = \begin{pmatrix} f_1&\alpha\\\beta&f_2\end{pmatrix}, F_2 = \begin{pmatrix}a&\frac12f_4+f\\ \frac12f_3+e&b\end{pmatrix}, F_3 = \begin{pmatrix} c&\frac12 f_4-f\\ \frac12f_3-e&-a-b-c\end{pmatrix}
\end{equation}
and $F_1$ is arbritary. In this case, the parabolic subgroup $P_G$ is formed by matrices of the block form $\begin{pmatrix} \ast&0\\ \ast&\ast\end{pmatrix}$. Therefore, $\n^0=\p_G^0$ can be identified with matrices with block-form
\[
\begin{pmatrix} 0&0\\ \ast&0\end{pmatrix}.
\]
Substituting (\ref{F}) in (\ref{Hcond}) we get
\begin{eqnarray}
\label{1} F_{-1}'+F_3-F_2 + [K_2,F_{-1}] &=& 0,\\
\label{2} F_2'+[K_2,F_2]-F_{-1}K_1 + F_1 &=&0\\
\label{3} F_3'+[K_3,F_3]+K_1F_{-1}-F_1 &=&0
\end{eqnarray} 
We can readily solve for $F_2-F_3$ using (\ref{1}) and for $F_1$ using (\ref{2}) minus (\ref{3}). The rest of the entries are uniquely determined by the entries in which $f'_i$ are located in the equations. Solving we get
\begin{eqnarray*}
\alpha &=& \frac1{k_2-k_1}\left(f_4'-2\k3D^{-1}(\k4f_4-\k3f_3)\right)\\
\beta &=&\frac1{k_2-k_1}\left(f_3'+2\k4D^{-1}(\k4f_4-\k3f_3)\right)\\
2a &=& f_1'+\k3 \beta - \alpha \k4+D^{-1}(\k4f_4-\k3f_3)\\
2c &=& -f_1'-\k3 \beta + \alpha \k4+D^{-1}(\k4f_4-\k3f_3)\\
2b &=& f_2'- \k3\beta +\alpha \k4 -D^{-1}(\k4f_4-\k3f_3)\\
2f &=& \alpha'+\k3(f_2-f_1)\\
2e &=& \beta'+\k4(h_1-h_2)
\end{eqnarray*}
Using this results we can calculate the reduction of (\ref{mainbr}) to the space of differential invariants. If $f$ and $h$ are two such functionals, then the Geometric Hamiltonian structure is defined as
\[
\{f,h\}(\kb) = \int_{S^1} \mathrm{trace}\left(\left(\frac{\delta \F}{\delta L}(K)\right)'+ \left[K,\frac{\delta \F}{\delta L}(K)\right]\right) \frac{\delta\Hop}{\delta L}(K)dx
\]
where $\frac{\delta\Hop}{\delta L}(K)$ is calculated similarly to the variational derivative of $\F$. The author of \cite{M1} proved that this bracket is always a Poisson bracket. To
show that the bracket can be further restricted to the submanifold $\k3=\k4=0$ we will check that, if $f$ depends only on $k_1$ and $k_2$ (that is, $f_3=f_4=0$), while $h$ depends only on $\k3$ and $\k4$ (that is, $h_2=h_1=0$), then $\{f,h\}(\kb) = 0$ along the submanifold $\k3=\k4=0$.  After this step, we can calculate the bracket of two functionals that depend only on $k_1$ and $k_2$. 

First of all, notice that condition (\ref{Hcond}) implies that
\[
\{f,h\}(\kb) = \int_{S^1}\mathrm{trace}\left((F_1'+K_1F_2-F_3K_1)H_{-1}\right)dx.
\]
If a functional $f$ satisfies $f_3=f_4=0$, then  we have $\alpha = \beta=e=f=0$ and $ a = \frac12 f_1' + C$, $b = \frac12 f_2' - C$, $c = -\frac12 f_1'+C$, where $C$ is a possible constant that will end up canceling out in the calculation of the bracket. These values imply that, whenever $\k3=\k4=0$, the matrix $F_1'+K_1F_2-F_3K_1$ will be diagonal. On the other hand, if $h_1=h_2=0$, then $H_{-1}$ will have a vanishing diagonal and, therefore, $\{f,h\}(\kb) = 0$.

Finally, if both $f$ and $h$ depend on $k_1, k_2$ only, then, when $\k3=\k4=0$
\[
F_1'+K_1F_2-F_3K_1 = \begin{pmatrix} -\frac12 f_1''+k_1f_1&0\\0&-\frac12 f_2'' + k_2f_2\end{pmatrix}_x + \begin{pmatrix}k_1 f_1'&0\\0&k_2 f_2'\end{pmatrix},
\]
and therefore
\[
\{f,h\}(\kb) = \int_{S^1}\mathrm{trace}\left(F_1'+K_1F_2-F_3K_1\right)H_{-1} dx =\int_{S^1}\begin{pmatrix}\frac{\delta f}{\delta k_1}& \frac{\delta f}{\delta k_2}\end{pmatrix}\P \begin{pmatrix}\frac{\delta h}{\delta k_1}\\ \frac{\delta h}{\delta k_2}\end{pmatrix} dx
\]
where
\[
\P = \begin{pmatrix} -\frac12 D^3 +k_1 D+Dk_1&0\\ 0& -\frac12 D^3+k_2D+Dk_2\end{pmatrix}
\]
which defines a decoupled Hamiltonian structure for KdV. One can also check directly that, if we substitute the values of (\ref{F}) in (\ref{secbr}) for the choice 
\[
L_0 = \begin{pmatrix} 0&0\\ I&0\end{pmatrix}
\]
the resulting bracket is given by
\[
\{f,h\}_0(k_1,k_2) = \frac12 \int_{S^1}\begin{pmatrix}\frac{\delta f}{\delta k_1}& \frac{\delta f}{\delta k_2}\end{pmatrix}\P_0 \begin{pmatrix}\frac{\delta h}{\delta k_1}\\ \frac{\delta h}{\delta k_2}\end{pmatrix} dx 
\]
where $\P_0$ is the second Hamiltonian structure for a decoupled system of KdV equations, that is
\[
\P_0 = \begin{pmatrix} D&0\\ 0&D\end{pmatrix}.
\] 
One can also check that the bracket of two Hamiltonians depending on $(k_1,k_2)$ only and $(\k3,\k4)$ only vanishes when $\k3$ and $\k4$ do. 

{\it Complex case}. In the complex case the same procedure is followed but with different matrices $K_1$, $K_2$, $F_1$, $F_{-1}$, $F_2$ and $F_3$. In this case
\[
K_1 = \begin{pmatrix} k_1&-k_2\\ k_2&k_1\end{pmatrix}, \hskip 2ex K_2 = \begin{pmatrix} \k3&\k4\\ -\k3&\k4\end{pmatrix}
\]
and
\[
F_{-1} = \begin{pmatrix}\frac12 f_1+\alpha&\frac12 f_2+\beta\\ -\frac12 f_2+\beta&\frac12 f_1-\alpha\end{pmatrix}
\]
\[
F_2 = \begin{pmatrix} \frac14f_3 + a&\frac14 f_4+f\\ \frac14 f_4+b&-\frac14 f_3+c\end{pmatrix}, \hskip 2ex F_3 = \begin{pmatrix} \frac14f_3 - a&\frac14 f_4+e\\ \frac14 f_4+d&-\frac14 f_3-c\end{pmatrix}.
\]
In this case, condition (\ref{Hcond}) has the same equations as the real case, but this result in different values for our unknowns
\begin{eqnarray*}
\alpha &=& -\frac1{4k_2}f_4'+\frac{\k3}{k_2}D^{-1}(\k3f_4-\k4f_3)\\
\beta &=& \frac1{4k_2}f_3'+\frac{\k4}{k_2}D^{-1}(\k3f_4-\k4f_3)\\
2a &=&  \alpha'+\frac 12 f_1'-\k4f_2\\
2c &=& - \alpha'+\frac 12 f_1'+\k4f_2\\
b+d &=& D^{-1}(\k3f_4-\k4f_3)\\
b-d &=& \beta'-\frac12f_2'+\k3(f_2-2\beta)+2\k4\alpha\\
f+e &=& -D^{-1}(\k3f_4-\k4f_3)\\
f-e &=& \beta'+\frac12f_2'+\k3(f_2+2\beta)-2\k4\alpha.
\end{eqnarray*}
 As before, if $f_3=f_4=0$ and $h_1=h_2=0$, one can check straightforward that $\{f,h\}(k_1,k_2) = 0$. The calculations are only slightly longer than the ones for the real case. Also, if both $f$ and $h$ satisfy $f_3=f_4=h_3=h_4 = 0$, then
 \[
 H_1'+K_1H_2-H_3K_1 
 \]
 \[
 = \frac12\begin{pmatrix}-\frac12 f_1'''+(k_1f_1)'+k_1f_1'+(k_2f_2)'+k_2f_2'&-\frac12 f_2'''+(k_1f_2)'+k_1f_2'-(k_2f_1)'-k_2f_1'\\
 \frac12 f_2'''-(k_1f_2)'-k_1f_2'+(k_2f_1)'+k_2f_1'&-\frac12 f_1'''+(k_1f_1)'+k_1f_1'+(k_2f_2)'+k_2f_2'\end{pmatrix}.
 \]
 Putting all of these values together in (\ref{mainbr}) we obtain the restricted bracket to be
 \[
 \{f,h\}(k_1, k_2) = \int_{S^1}\begin{pmatrix}\frac{\delta f}{\delta k_1}& \frac{\delta f}{\delta k_2}\end{pmatrix}\P \begin{pmatrix}\frac{\delta h}{\delta k_1}\\ \frac{\delta h}{\delta k_2}\end{pmatrix} dx
\]
where
\[
\P = \frac12\begin{pmatrix} -\frac12D^3+Dk_1+k_1D & Dk_2+k_2D\\ Dk_2+k_2D& \frac12 D^3-Dk_1-k_1D\end{pmatrix}
\]
the well-known Hamiltonian structure for a complexly coupled system of KdV 
equations. This structure had been already obtained in \cite{M3}. Accordingly, we can guess that the choice of third normalization in \cite{M3} is in a different prolonged orbit that the real case we found here, and that a choice in the same orbit will result in a decoupled system of KdV equations. Once more, if we substitute our values in (\ref{secbr}) for the same choice of $L_0$ as in the real case, we obtain a second Poisson structure $\P_0$, namely
\[
\P_0 = \begin{pmatrix} D&0\\ 0&-D\end{pmatrix}.
\]
 This is known to be the second Hamiltonian structure for a complexly coupled KdV system.
As before, one can also check that the bracket of Hamiltonians depending on $(k_1,k_2)$ only and $(\k3,\k4)$ only vanishes when $\k3$ and $\k4$ do. 

The interest of these different structures is that (\ref{mainbr}) reduces always to a Geometric Poisson bracket that is directly linked to a geometric realization of the Hamiltonian evolution. The relation is given as in (\ref{groupev}). Therefore, we have almost finished the proof of the following Theorem.
\begin{theorem}
Assume $u(t,x)$ describes an evolution of a curve of Grassmannian planes in $\R^4$ solution of the equation
\begin{equation}\label{Kdv-Sch}
u_t =  u_1 S(u) = u_3 - \frac32 u_2 u_1^{-1} u_2.
\end{equation} 

Then, if $S(u)$ has real eigenvalues $k_1$ and $k_2$, as $\k3, \k4 \to 0$, the curvatures $k_1$ and $k_2$ satisfy a decoupled system of KdV equations. If $S(u)$ has complex eigenvalues, $k_1\pm k_2 i$, then $k_1$ and $k_2$ satisfy a complexly coupled system of KdV equations. In both cases, the Poisson structures (\ref{mainbr}) and (\ref{secbr}) reduce to the space $\k3 = \k4 = 0$ to produce a biHamiltonian pencil for decoupled KdVs or complexly coupled KdVs, depending on the case.
\end{theorem}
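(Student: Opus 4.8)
The plan is to combine the reduction computations of the previous section with the geometric--realization correspondence of Section~\ref{Hamiltonian}, so that essentially only the identification of the flow (\ref{Kdv-Sch}) remains. First I would note that the restriction of the structures to $\{\kappa_3=\kappa_4=0\}$ is already in hand: the geometric bracket obtained from (\ref{mainbr}) on $\K$ was shown above to satisfy $\{f,h\}(\kb)=0$ on that submanifold whenever $f=f(k_1,k_2)$ and $h=h(\kappa_3,\kappa_4)$, which is precisely the condition for the bracket to restrict to that submanifold, and the restriction of (\ref{mainbr}) to functionals of $(k_1,k_2)$ was computed to be the operator $\P$ — the first Hamiltonian operator of a decoupled (real case), respectively complexly coupled (complex case), system of two KdV equations — while the same substitution in (\ref{secbr}) gives $\P_0$. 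Thus both structures restrict, and since $(\P,\P_0)$ is the classical KdV Poisson pencil (compatibility being the classical fact, or inherited from that of (\ref{mainbr}) and (\ref{secbr})), the restricted structure is a biHamiltonian pencil for the corresponding KdV system.

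Next I would use the correspondence of Section~\ref{Hamiltonian}: a quadratic Hamiltonian $h=h(k_1,k_2)$ produces the evolution $\kb_t=\P\,\frac{\delta h}{\delta\kb}$, which is realized geometrically by (\ref{groupev}) with $\r=\bigl(\var{\F}(K_N)\bigr)_{-1}$, where $\F$ is the extension of $h$ constant along the $\Lo N$-leaves along $\K$ whose blocks were obtained from (\ref{Hcond}) above. For $f_3=f_4=0$ those formulas give $\alpha=\beta=0$, so $\r=F_{-1}=\mathrm{diag}\bigl(\frac{\delta h}{\delta k_1},\frac{\delta h}{\delta k_2}\bigr)$ in the real case, and the corresponding matrix in the complex case. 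By construction of the natural frame, $B_N S(u) B_N^{-1}$ — the normal form of the Grassmannian Schwarzian derivative, $B_N$ the $\g_0$-factor of $\rho_N$ — is a fixed multiple of $\mathrm{diag}(k_1,k_2)$ (real), respectively of $\begin{pmatrix}k_1&-k_2\\ k_2&k_1\end{pmatrix}$ (complex), so after adjusting the coefficients of $h$ by universal constants one has $F_{-1}=B_NS(u)B_N^{-1}$. Then (\ref{groupev}), with $\rho_{-1}=\begin{pmatrix}I&u\\0&I\end{pmatrix}$ and $\rho_0$ written in block form, becomes $u_t=u_1B_N^{-1}\bigl(B_NS(u)B_N^{-1}\bigr)B_N=u_1S(u)$, which is (\ref{Kdv-Sch}).

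It follows that (\ref{Kdv-Sch}) is the geometric realization of the Hamiltonian system $\kb_t=\{\kb,h\}_R$ for this $h$. Because $h$ depends only on $(k_1,k_2)$ and, by the first step, the reduced bracket preserves $\{\kappa_3=\kappa_4=0\}$ — apply the vanishing criterion with second functional $\kappa_3$ and $\kappa_4$ — the induced evolution of $(k_1,k_2)$ restricted to that submanifold is $\P\,\frac{\delta h}{\delta\kb}$, namely the decoupled, respectively complexly coupled, KdV system, in the very form already extracted as the limit $\kappa_3,\kappa_4\to0$ in the proof of Theorem~\ref{levelset} and in (\ref{grasscomplex}). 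Since the natural frame does not permit $\kappa_3$ to vanish for an honest curve, that submanifold is reached only in the limit $\kappa_3,\kappa_4\to0$, which is the content of the assertion; together with the first step this also supplies the biHamiltonian pencil.

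The main obstacle — and the only place the natural frame is really exploited — is to show that, with $K_N$ in place of the local Maurer--Cartan matrix of Theorem~\ref{levelset}, the coupling and apparently singular terms in the $(k_1,k_2)$-equations assemble into a \emph{regular} limit as $\kappa_3,\kappa_4\to0$ and the level set $\{\kappa_3=\kappa_4=0\}$ becomes invariant, in contrast with the blow-up and loss of invariance diagnosed there. If one preferred to bypass the Hamiltonian formalism, I would instead put $N=\rho_N^{-1}(\rho_N)_t$ for the flow (\ref{Kdv-Sch}), with $\g_{-1}$-component $\r=B_NS(u)B_N^{-1}$, and solve the compatibility equations $K_{N,t}=N_x+[K_N,N]$ block by block exactly as in the proof of Theorem~\ref{levelset}; checking that the limit stays regular is then the crux of the bookkeeping.
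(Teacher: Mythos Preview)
Your proposal is correct and follows essentially the same route as the paper: invoke the bracket reductions already computed, take the quadratic Hamiltonian $h=\tfrac12\int_{S^1}(k_1^2+k_2^2)\,dx$ so that $F_{-1}=i_3$, and use the third normalization equation $BS(u)B^{-1}=i_3$ in (\ref{groupev}) to identify the realization as $u_t=u_1B^{-1}i_3B=u_1S(u)$. The only difference is cosmetic---the paper fixes $h$ explicitly rather than leaving ``universal constants'' to adjust, and it does not include your alternative compatibility-equation route.
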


\begin{proof}
Assume $\left(\frac{\delta \Hop}{\delta L}(K)\right)_{-1} = g_r$, with $g_r =\begin{pmatrix} 0&\rr\\0&0\end{pmatrix}$, $\rr$ a matrix of differential invariants, and 
where $\Hop$ is the appropriate extension of a Hamiltonian functional $h$, and where by $()_{-1}$ we indicate the projection on the tangent to the manifold $G/P$, as represented by the subspace $\g_{-1}$ of the Lie algebra $\g$. Then, from \cite{M1}, the evolution 
\[
\rho_{-1}^{-1}(\rho_{-1})_t = Ad(\rho_0) g_r
\]
induces the $h$-hamiltonian evolution on the differential invariants. If we choose $h(k_1,k_2) = \frac12\int_{S^1}(k_1^2+k_2^2)dx$, then $h_1 = k_1$ and $h_2 = k_2$ and $H_{-1} = \rr = \begin{pmatrix} k_1&0\\0& k_2\end{pmatrix}$. Calculating (\ref{groupev}) with the natural moving frame we obtain
\[
\begin{pmatrix} I&-u\\ 0&I\end{pmatrix}\begin{pmatrix} 0&u_t\\0&0\end{pmatrix} = \begin{pmatrix} u_1 B^{-1}&0\\ 0&B^{-1}\end{pmatrix} \begin{pmatrix} 0&\rr\\0&0\end{pmatrix}\begin{pmatrix}Bu_1^{-1}&0\\0&B\end{pmatrix}
\]
which results in
\[
u_t = u_1B^{-1}\rr B.
\]
If we now look at the third normalization equations, notice that $i_3$ is either $\begin{pmatrix} k_1&0\\ 0&k_2\end{pmatrix}$ in the real case, or $\begin{pmatrix} k_1&-k_2\\ k_2&k_1\end{pmatrix}$ in the complex case. In either situation, if we choose $\rr = i_3$, then we get $B^{-1}i_3 B = S(u)$, since the third normalization equation in both cases read $BS(u)B^{-1} = i_3$.

The choice of $h$ clearly results in the completely integrable systems the Theorem states.
\end{proof}
Notice that this immediately implies that equation (\ref{Kdv-Sch}) preserves the level set $\k3= \k4=0$, a fact that can also be found directly using the techniques we used in section \ref{3.3}. 

The last results in this paper are proven choosing a different, but also non-local, moving frame. This moving frame corresponds to the Laguerre-Forsyth's canonical form of the Serret-Frenet equations for $\rho$, and it was linked to Grassmannians in \cite{Se}. The invariants generated by this last Grassmannian moving frame evolve following a noncommutative KdV equation. When reduced to these invariants, both brackets (\ref{mainbr}) and (\ref{secbr}) become the biHamiltonian structure for noncommutative KdV.

\begin{theorem} There exists a (non-local) Grassmannian moving frame such that its associated Maurer-Cartan matrix is given by
\[
\begin{pmatrix} 0&I\\ \widehat K&0\end{pmatrix}
\]
where the entries of $\widehat K$ are independent and generating differential invariants. Both structures (\ref{mainbr}) and (\ref{secbr}) (with the choice $L_0 = \begin{pmatrix} 0&0\\ I&0\end{pmatrix}$) reduce to $\K$ as represented by these invariants to produce a biHamiltonian structure for the noncommutative KdV equation. Furthermore, the evolution
\[
u_t = u_1 \widehat K
\]
induces a noncommutative KdV equation on $\widehat K$.
\end{theorem}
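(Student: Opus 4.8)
The plan is to establish the three assertions in turn, and the crucial observation is that the diagonal blocks of the Maurer--Cartan matrices (\ref{grassK}) and (\ref{grassKc}) can be removed by a non-local gauge. Concretely, I would start from the left moving frame $\rho$ of Section~3, with Maurer--Cartan matrix $K$ equal to (\ref{grassK}) in the real case and (\ref{grassKc}) in the complex case. In both matrices the two diagonal $2\times 2$ blocks coincide, $K_{11}=K_{22}$, and each is trace-free. Hence the linear system $(g_0)_x=-K_{11}g_0$ has a (non-local) solution $g_0\colon\R\to\SL(2,\R)$, and setting $\rho_N=\rho\,\mathrm{diag}(g_0,g_0)$ and computing the gauge $\rho_N^{-1}(\rho_N)_x=\mathrm{diag}(g_0,g_0)^{-1}K\,\mathrm{diag}(g_0,g_0)+\mathrm{diag}(g_0,g_0)^{-1}\bigl(\mathrm{diag}(g_0,g_0)\bigr)_x$ one finds that the diagonal blocks become $0$, the $(1,2)$-block stays $I$, and the $(2,1)$-block becomes $\widehat K=g_0^{-1}K_{21}g_0$, so that $\rho_N^{-1}(\rho_N)_x=\left(\begin{smallmatrix}0&I\\ \widehat K&0\end{smallmatrix}\right)$. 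Since $\widehat K$ comes from $K$ by an invariant gauge, its entries still functionally generate all differential invariants (Hubert's theorem \cite{H}, or by inverting the change of generators); this is the frame underlying the Laguerre--Forsyth form of \cite{Se}, equivalently the second-order matrix operator $D^2-\widehat K$ attached to the curve. Note $\widehat K$ is a genuine $\gl(2,\R)$-valued field: because the diagonal blocks of $K$ vanish identically, no trace constraint is imposed on $\widehat K$, which is exactly what makes the identification with noncommutative KdV possible.

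For the brackets, $\K$ is now the affine loop space $\{\left(\begin{smallmatrix}0&I\\ \widehat K&0\end{smallmatrix}\right)\}\cong\Lo\gl(2,\R)^\ast$, and the reduction of (\ref{mainbr}) to it is the Drinfeld--Sokolov/Adler--Gelfand--Dickey reduction for $D^2-\widehat K$. Following Section~\ref{Hamiltonian}, given $f\colon\K\to\R$ I would solve the extension condition (\ref{Hcond}) for $\var\Fop=\left(\begin{smallmatrix}F_2&F_{-1}\\ F_1&F_2'\end{smallmatrix}\right)$: here $F_{-1}$ is prescribed by $\delta f/\delta\widehat K$, and because $\mathrm{ad}(K)$ is built from the invertible shift $\left(\begin{smallmatrix}0&I\\0&0\end{smallmatrix}\right)$, the remaining blocks are recovered by back-substitution with a single $D^{-1}$, exactly as in the reductions carried out in Section~\ref{Hamiltonian}. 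Substituting into (\ref{mainbr}) and integrating by parts collapses the bracket to $\{f,h\}(\widehat K)=\int_{S^1}\mathrm{trace}\bigl(\P(\delta f/\delta\widehat K)\,\delta h/\delta\widehat K\bigr)\,dx$ with $\P$ the quadratic (``second'') Hamiltonian operator of the noncommutative KdV hierarchy, and running the same computation for (\ref{secbr}) with $L_0=\left(\begin{smallmatrix}0&0\\ I&0\end{smallmatrix}\right)$ produces the linear (``first'') operator $\P_0$. These are the biHamiltonian pair of \cite{OS}; compatibility is automatic since both descend from the one-parameter family $\{\,,\,\}+\lambda\{\,,\,\}_0$ on $\Lo\g^\ast$, which the general theory of \cite{M1} guarantees reduces.

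For the evolution I would follow the template of the proof of Theorem~\ref{levelset}: write $\rho_N^{-1}(\rho_N)_x=K$ and $\rho_N^{-1}(\rho_N)_t=N$, where by \cite{M1} $N=\left(\begin{smallmatrix}N_{11}&N_{-1}\\ N_{1}&N_{22}\end{smallmatrix}\right)$ and the $\g_{-1}$-block $N_{-1}$ is fixed by the requirement that the induced curve evolution be $u_t=u_1\widehat K$ (this uses (\ref{invev}) and the factorization of $\rho_N$ worked out above). Imposing the zero-curvature equation $K_t=N_x+[K,N]$ and splitting it along the gradation, the $\g_{-1}$- and $\g_0$-components together with $\mathrm{trace}\,N=0$ determine $N_{11}$, $N_{22}$ and $N_1$ algebraically, up to one integration, and substituting these into the surviving $\g_1$-component gives $\widehat K_t=\widehat K_{xxx}+3(\widehat K\widehat K_x+\widehat K_x\widehat K)$ up to normalization, i.e.\ the noncommutative KdV equation; equivalently, this flow is the geometric realization, via (\ref{groupev}), of a quadratic Hamiltonian such as $\frac12\int_{S^1}\mathrm{trace}(\widehat K^2)\,dx$ with respect to the reduced structures above, and the two computations agree.

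The main obstacle is the middle step. Everything else is either a direct gauge or zero-curvature manipulation, or an appeal to results already in the excerpt, but carrying the Adler--Gelfand--Dickey computation through for $2\times 2$-matrix coefficients and matching the two reduced operators $\P$, $\P_0$ \emph{exactly} to the normalizations of the biHamiltonian pair of \cite{OS} — keeping track of the non-local term in the quadratic structure and checking that the trace mode of $\widehat K$ reproduces the corresponding scalar mode of noncommutative KdV — is where the real work lies; a secondary delicate point is verifying that the specific curve flow $u_t=u_1\widehat K$ is indeed the one whose $\g_{-1}$-datum enters the zero-curvature equation cleanly.
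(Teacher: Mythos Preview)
Your overall strategy---gauge away the diagonal blocks by solving $(g_0)_x=-K_{11}g_0$, then compute the reduction of (\ref{mainbr}) and (\ref{secbr}) via (\ref{Hcond}), then read off the induced flow from zero-curvature---is precisely the paper's approach, and the explicit formulas you would obtain for $A$, $B$, $C$ (your $F_2$, $F_2'$, $F_1$) and for the operators $\P$, $\P_0$ match the paper's.

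There is, however, a genuine gap in your justification that the entries of $\widehat K$ are \emph{generating} differential invariants. You write that this follows from ``Hubert's theorem \cite{H}, or by inverting the change of generators,'' but neither works as stated. Hubert's result guarantees generation only for Maurer--Cartan matrices of moving frames obtained by a normalization procedure; the paper explicitly flags that ``an arbitrary gauge by a matrix of invariants \dots\ does result in a new moving frame, but the entries of its Maurer-Cartan matrix do not necessarily generate all other invariants,'' and that ``a general theorem for any moving frame has not been proved yet.'' Your gauge $g_0$ is non-local and is certainly not produced by a normalization, so \cite{H} does not apply. Nor is ``inverting the change of generators'' immediate: the map $(k_1,k_2,\k3,\k4)\mapsto\widehat K=g_0^{-1}K_{21}g_0$ passes through the solution $g_0$ of an ODE whose coefficients are exactly the $\k3,\k4$ you want to recover, so invertibility is a priori circular. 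The trace and determinant of $\widehat K$ recover $k_1,k_2$, but recovering $\k3,\k4$ requires an explicit argument. The paper closes this gap by a direct computation: writing $\Theta=\left(\begin{smallmatrix}a&b\\c&d\end{smallmatrix}\right)$ it extracts the ratios $a'/b'$ and $a/b$ from the entries of $\widehat K$ and then shows that $\k3\k4$ and $\k3'/\k3$ are determined by these ratios and their derivatives. You should either reproduce that computation or supply an alternative argument; the appeal to \cite{H} is not valid here.

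A secondary remark: you assert $g_0\in\SL(2,\R)$ because $K_{11}$ is trace-free. That is correct, but note that this is exactly why the $(1,2)$-block remains $I$ after the gauge; it is worth saying so, since otherwise the block becomes $g_0^{-1}g_0$ only up to this determinant normalization.
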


\begin{proof}
Since the technique is identical to the previous cases, we will describe the first part of the calculations without too many explanations. If we gauge (\ref{grassK}) or (\ref{grassKc}) by an element of the form 
\[
g = \begin{pmatrix}\Theta&0\\ 0&\Theta\end{pmatrix}
\]
the result is
\[
\begin{pmatrix}\Theta^{-1}\Theta_x+\Theta^{-1}K_0\Theta&I\\ \Theta^{-1} K_1\Theta&\Theta^{-1}\Theta_x+\Theta^{-1}K_0\Theta\end{pmatrix}.
\]
Therefore, choosing $\Theta$ to be the solution of $\Theta_x = - K_0 \Theta$ results on the choice $\widehat K = \Theta^{-1} K_1 \Theta$. Notice that the invariants $k_1$ and $k_2$ are generated by the trace and determinant of $\widehat K$.

We now calculate the reduction of the Poisson brackets. Assume
\[
\var{\Hop} = \begin{pmatrix}A&\h\\ C&B\end{pmatrix}
\]
is the derivative of an appropriate extension of $h$ with $\h = \frac{\delta h}{\delta \kappa}$. Then, equation (\ref{Hcond}) results in the values
\begin{eqnarray*}
A &=&\frac12\h_x-\frac12 D^{-1}(\widehat K\h-\h\widehat K)\\
B &=&-\frac12\h_x-\frac12 D^{-1}(\widehat K\h-\h\widehat K)\\
C &=&-\frac12\h_{xx}+\frac12(\widehat K\h+\h\widehat K).
\end{eqnarray*}
Using this extension, the reduction of (\ref{mainbr}) is given by
\[
\{h, f\}_R(\kappa) = \int \mathrm{tr}\left((C_x+\widehat K A- B \widehat K)\frac{\delta f}{\delta \kappa}\right) dx = \int \h^T \P {\bf f} dx
\]
where 
\[
2\P\h = - \h_{xxx}+(\h \widehat K + \widehat K \h)_x+ \widehat K \h_x+\h_x\widehat K- \widehat KD^{-1}(\widehat K\h-\h\widehat K)+D^{-1}(\widehat K\h-\h\widehat K)\widehat K.
\]
 Choosing $L_0 = \begin{pmatrix} 0&0\\ I&0\end{pmatrix}$ in (\ref{secbr}) and using the expression for the extension $\var{\Hop}$, we have that the reduction of (\ref{secbr}) is given by
\[
\{h, f\}_0(\kappa) = \int \h^T D {\bf f} dx.
\]
These two structure are the ones appearing in \cite{OS} as biHamiltonian structures for noncommutative KdV.
The calculation of the geometric realization is identical to the previous cases.

There is one last point that need to be checked. An arbitrary gauge by a matrix of invariants of a moving frame it does result in a new moving frame, but the entries of its Maurer-Cartan matrix do not necessarily generate all other invariants. Indeed, the proof in \cite{H} shows that if a moving frame is found using normalization constants (as in the first frame we found), then they do indeed generate. But, although perhaps true, a general theorem for any moving frame has not been proved yet. Therefore, we need to check that such is the case here.

We know that $k_1$ and $k_2$ are generated by the entries of $\widehat{K}$. Lengthy but straightforward calculations show that, if $\Theta = \begin{pmatrix} a&b\\ c&d\end{pmatrix}$ and $\widehat{K} = \begin{pmatrix}\widehat{\k1}&\widehat{\k2}\\\widehat{\k3}&\widehat{\k4}\end{pmatrix}$, then 
\[
\frac{a' a}{b b'}=-\frac{\widehat{\k3}}{\widehat{\k2}}, \hskip 3ex \frac{a' b}{ab'}=\frac{k_2^2-k_1^2}{\widehat{\k1} k_1-\widehat{\k4} k_2}+1.
\]
This implies that $n_1 = \frac{a'}{b'}$ and $n_2 = \frac ab$ are functionally generated by $\widehat{\k i}$, $i=1,2,3,4$. Further calculations show that $\k 3\k 4 = -n_1'n_2'$ and 
\[
\frac{\k3'}{\k3} n_2' - 2 \frac{n_2'}{n_1-n_2} = n_2''
\]
concluding that both $\k3$ and $\k 4$ are also formally functionally generated by the entries of $\widehat K$.

\end{proof}

Our final corollaries state the existence of a conformal level set for a decoupled system of KdV equations. It also shows that the signature $(2,2)$ conformal Poisson brackets are equivalent to the bi-Hamiltonian structure for non-commutative KdV $2\times 2$ equations. Although this is immediate under the isomorphism, it was not previously obvious.
 \begin{corollary} There exists a conformally invariant evolution of curves (written in terms of a natural classical moving frame) such that it preserves $\k3 = \k4 = 0$. As $\k3, \k4 \to 0$, the evolution induces a decoupled system of KdV equations on $k_1$ and $k_2$.
 \end{corollary}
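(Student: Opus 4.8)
The plan is to deduce the statement by transporting, through the parabolic equivalence $\Phi$ of Section~2, the Grassmannian geometric realization of a decoupled system of KdV equations established in the Theorem containing~(\ref{Kdv-Sch}). Recall that $\Phi\colon\SL(4,\R)\to\SO(3,3)$ is a double cover which induces an isomorphism of homogeneous spaces $\mathrm{Gr}_2(\R^4)\cong Q$ intertwining the two group actions, together with a graded isomorphism $\phi\colon\sl(4,\R)\to\so(3,3)$ carrying the $|1|$-grading $\g_{-1}\oplus\g_0\oplus\g_1$ of the first algebra to that of the second. Applying $\phi$ to a left Grassmannian moving frame $\rho$ along a generic curve $u$ yields (locally) a left conformal moving frame along the corresponding conformal curve, with Maurer-Cartan matrix $\phi(\rho^{-1}\rho_x)$; likewise $\phi$ carries the $\g_0$-factor $\mathrm{diag}(u_1B^{-1},B^{-1})$ of the Grassmannian frame to the conformal $\g_0$-factor $\mathrm{diag}(\alpha,\Theta,\alpha^{-1})$, i.e.\ the natural classical frame on the conformal side.

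First I would check that, under this correspondence, the Grassmannian natural frame is carried to the conformal natural frame established earlier in the paper; this is precisely the point of having coordinated the normalization constants, so that in the real case the conformal normalization~(\ref{reali3}) matches the diagonalized Schwarzian normalization on the Grassmannian side. Comparing the two real-case natural Maurer-Cartan matrices $K_N$ via the explicit block formulas~(\ref{equiv1})--(\ref{equiv2}) for $\phi$, one reads off that the generators $k_1$, $k_2$, $\k3$, $\k4$ are literally identified. In particular the level set $\{\k3=\k4=0\}$ in $\K$ on the Grassmannian side is carried to the level set $\{\k3=\k4=0\}$ on the conformal side, and the decoupled KdV system for $(k_1,k_2)$ is the same object on both sides.

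With this identification in hand, the corollary follows by pushing the flow forward. The Grassmannian evolution $u_t=u_1S(u)$ is the geometric realization~(\ref{groupev}) for the choice $\r=i_3$ written in the natural frame; since $\Phi$ intertwines the action, the grading, the frame and the invariants, its image is again a flow of the form~(\ref{groupev}) for the conformal natural frame, hence a conformally invariant evolution of curves written in terms of the natural classical (Frenet-type) moving frame. By the Theorem containing~(\ref{Kdv-Sch}) this flow preserves $\k3=\k4=0$, and as $\k3,\k4\to0$ it induces on $k_1$ and $k_2$ the decoupled system of two KdV equations, which is exactly the assertion.

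The step I expect to be the main obstacle is the bookkeeping in the second paragraph: verifying that the non-local gauges defining the two natural frames are compatible under $\phi$ --- concretely, that the Grassmannian gauge built from $\gamma=e^{-\int k_4}$ matches the conformal gauge built from $\gamma=e^{-\int k_3}$ (the shift of invariant reflecting the reshuffling of graded pieces by $\phi$) --- and that~(\ref{reali3}) indeed selects the prolonged orbit corresponding to the real, diagonalizable Grassmannian case. Once this is settled the remainder is pure transport of structure. Alternatively, as indicated after the proof of the previous Theorem, one may bypass $\Phi$ altogether and rerun the reduction of~(\ref{mainbr}) directly in a general conformal sphere with the normalization~(\ref{reali3}): this produces the decoupled Hamiltonian pencil $\P$, $\P_0$ as in the real case above and then, via~(\ref{groupev}), the realization; but the route through $\Phi$ is appreciably shorter.
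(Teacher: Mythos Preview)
Your approach is exactly the one the paper takes: the corollary is stated without an independent proof and is deduced by transporting the Grassmannian realization of the decoupled KdV system through the isomorphism $\Phi$, with the alternative of redoing the conformal reduction directly using the normalization~(\ref{reali3}) also mentioned. Your write-up simply supplies the details the paper leaves to the reader.
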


 \begin{corollary} Both brackets (\ref{mainbr}) and (\ref{secbr}) reduce to the space of conformal differential invariants to produce a biHamiltonian structure equivalent to that of the non-commutative KdV equation that appears in \cite{OS}.
 \end{corollary}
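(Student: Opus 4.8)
The plan is to obtain this corollary as a transport, along the parabolic equivalence of Section~2, of the Grassmannian computation carried out in the preceding theorem, exploiting the fact that the brackets (\ref{mainbr}) and (\ref{secbr}) and the entire reduction procedure of \cite{M1} are phrased in purely Lie-theoretic terms.

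First I would record the algebraic input. The group homomorphism $\Phi$ of Section~2 differentiates to a Lie algebra isomorphism $\phi:\sl(4,\R)\to\so(3,3)$ which is graded, carrying the $|1|$-grading $\g_{-1}\oplus\g_0\oplus\g_1$ attached to $P_G$ onto the one attached to $P_C$, mapping $\p_G=\g_1\oplus\g_0$ onto $\p_C$ and the section $\begin{pmatrix} I&u\\ 0&I\end{pmatrix}$ onto the section $g_u$. Since $\sl(4,\R)$ and $\so(3,3)$ are simple, their invariant bilinear forms are unique up to scale, so $\phi$ automatically intertwines, up to a nonzero constant, the trace pairing on $\sl(4,\R)$ with the trace pairing on $\so(3,3)$ used to identify each algebra with its dual. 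Acting pointwise on loops, $\phi$ therefore induces a map $\Lo\sl(4,\R)^\ast\to\Lo\so(3,3)^\ast$ which, up to rescaling, is a Poisson morphism for (\ref{mainbr}); and it is one for (\ref{secbr}) once the constant $L_0=\begin{pmatrix} 0&0\\ I&0\end{pmatrix}\in\g_{-1}$ is carried to $\phi(L_0)$, which again sits in the $\g_{-1}$-part on the conformal side.

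Next I would check that the reduction itself is transported. By \cite{M1} the space $\K$ of Maurer-Cartan matrices is locally $U/\Lo P_G$ on the Grassmannian side and $U'/\Lo P_C$ on the conformal side, and $\phi$ matches these two presentations because it maps $P_G$ to $P_C$ and $\K$ to $\K$; that $\Phi$ is only a double cover on the parabolic subgroups is irrelevant here, since the reduction sees only the Lie algebras and the one-to-one correspondence between the two sections. The admissibility condition (\ref{Hcond}), $\left(\var\Hop\right)_x+\mathrm{ad}(K)\left(\var\Hop\right)\in\h^0$, is manifestly $\phi$-equivariant, so an extension constant on the $\Lo P_G$-leaves along $\K$ is carried to one constant on the $\Lo P_C$-leaves, and the reduced conformal brackets are exactly the $\phi$-images of the reduced Grassmannian brackets. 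Finally, applying $\phi$ to the non-local Grassmannian frame of the previous theorem (whose Maurer-Cartan matrix is $\begin{pmatrix} 0&I\\ \widehat{K}&0\end{pmatrix}$ with $\widehat{K}$ generating) produces a conformal moving frame with a Maurer-Cartan matrix of the same block shape and the same generators $\widehat{K}$; since the previous theorem identified the reductions of (\ref{mainbr}) and (\ref{secbr}) with respect to $\widehat{K}$ as the biHamiltonian pair of \cite{OS} for noncommutative KdV, the conformal reductions are equivalent, through the linear change of variables $\phi$, to that same pair. (Corollary~1 follows by the identical transport applied to the evolution $u_t=u_1S(u)$ and the natural-frame computation of the previous theorem, carrying the preserved level set $\k3=\k4=0$ and the decoupled KdV realization into the conformal sphere.)

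I expect the only genuine work to be bookkeeping: verifying that every ingredient --- the section, the parabolic, the presentation $U/\Lo P_G$ of $\K$, the condition (\ref{Hcond}), the choice of $L_0$, and the non-local Laguerre--Forsyth frame --- is transported consistently by $\phi$, tracking the harmless overall scalar coming from the mismatch of trace forms, and in particular confirming that passing through the double cover $\Phi$ creates no difficulty. The identification with the \cite{OS} structures is then immediate from the preceding theorem; no new integrable-systems computation is required. As the authors remark, the point is precisely that this equivalence, though ``immediate under the isomorphism,'' was not previously apparent.
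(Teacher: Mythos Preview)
Your proposal is correct and follows exactly the approach the paper intends: the paper offers no separate proof of this corollary, merely noting that ``this is immediate under the isomorphism'' and referring the reader to the Grassmannian computation via $\Phi$. Your write-up is in fact more careful than the paper's, spelling out why the graded Lie algebra isomorphism $\phi$ transports the trace pairing, the condition (\ref{Hcond}), the quotient presentation of $\K$, the choice of $L_0$, and the Laguerre--Forsyth frame consistently; the paper takes all of this for granted.
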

  The interested reader can find the exact equations for these geometric realizations using the isomorphism with the corresponding Grassmannian evolution.


\begin{thebibliography}{bibi}
\bibitem{Anco} {\sc S. Anco.}
\newblock {\em Hamiltonian flows of curves in G/SO(N) and vector soliton equations of mKdV and sine-Gordon type,}
\newblock  SIGMA, {\bf 2} (2006) 044.
\bibitem{B}
{\sc R. Bishop}.
\newblock {\it There is more than one way to frame a curve},
\newblock { American Mathematical Monthly}, (1975) 246--251.
\bibitem{Doliwa} {\sc A. Doliwa and P.M. Santini.}
\newblock {\em  An elementary geometric
characterization of the integrable motions of a curve,}
\newblock Phys. Lett., {\bf A185} (1994), 373--384.
\bibitem{ES}{\sc M.G. Eastwood, J. Slov\'ak}, {\em Preferred parametrizations on homogeneous curves,} 
\newblock Comment. Math. Uni. Cardin. {\bf 45}, (2004) 597-606.
\bibitem{FO} {\sc M. Fels, P.J. Olver}, {\em Moving coframes. II. Regularization and theoretical foundations}, Acta Appl. Math., {\bf 55} (1999) 127--208.
\bibitem{G}
{\sc M. L. Green}.
\newblock {\it The moving frame, differential invariants and rigidity theorems
for curves in homogeneous spaces},
\newblock {Duke Mathematical Journal}, {\bf 45}(4)  (1978), 735--779,.
 \bibitem{H} {\sc E. Hubert}.
\newblock {\it Generation properties of differential invariants in the moving frame methods},
\newblock{preprint}.
 \bibitem{Qu1} {\sc Kai-Seng Chou and C. Qu.}
\newblock {\em Integrable equations arising from motions of plane curves,}
\newblock  Phys. D, {\bf 162}(1-2), (2002) 9-33.
\bibitem{Qu2} {\sc Kai-Seng Chou and C. Qu.}
\newblock {\em Integrable equations arising from motions of plane curves II,}
\newblock  J. Nonlinear Sci., {\bf 13}(15), (2003) 487-517.
\bibitem{LP} {\sc J. Langer and R. Perline.}
\newblock {\em Geometric realizations of Fordy-Kulish nonlinear Schr\"odinger systems,}
\newblock Pacific Journal of Math., {\bf 195}(1), (2000) 157-178.
\bibitem{LP1} {\sc J. Langer and R. Perline.}
\newblock {\em Poisson geometry of the filament equation,}
\newblock J. Nonlinear Sci., {\bf 1}(1), (1991) 71--93.
\bibitem{M1}
{\sc G. Mar\'\i~Beffa}.
\newblock {\em Geometric Poisson brackets in flat semisimple homogenous spaces},
\newblock The Asian Journal of Mathematics, {\bf 12}, 1, (2008) 1-33.
\bibitem{M2}
{\sc G. Mar\'\i~Beffa}.
\newblock {\em Poisson brackets associated to the Conformal geometry of curves}, 
\newblock Transactions of the AMS, {\bf 357}, (2005) 2799-2827.
\bibitem{M3}
{\sc G. Mar\'\i~Beffa}.
\newblock {\em Projective-type differential invariants and geometric curve evolutions of KdV-type in flat homogeneous manifolds}, 
\newblock Annales de l'Institut Fourier, {\bf 58},4 (2008) 1295-1335.
\bibitem{M4}
{\sc G. Mar\'\i~Beffa}.
\newblock {\em On completely integrable geometric evolutions of curves of Lagrangian planes},
\newblock Proceedings of the Royal Society of Edinburg, {\bf 137A} (2007), 111--131.
\bibitem{OS}
{\sc P. Olver and V. Sokolov}.
\newblock{\em Integrable evolution equations on Associative Algebras},
\newblock {Commun. Math. Phys. {\bf 193} (1998) 245-268.}
 \bibitem{SW} {\sc J. Sanders and J.P. Wang}.
\newblock{\em  Integrable Systems in n-dimensional Riemannian Geometry},
 \newblock Moscow Mathematical Journal, {\bf 4}, (2003) 1369--1393. 
\bibitem{Se} {\sc Y. Se-ashi}.
\newblock {\em A geometric construction of Laguerre-ForsythÕs canonical forms of linear
ordinary differential equations}.
\newblock Shiohama, K. (ed.), Progress in differential geometry. Ltd. Adv. Stud. Pure Math. {\bf 22}, (1993)  265-297.
 \bibitem{TT} {\sc C.L. Terng, G. Thorbergsson.}
\newblock  {\em Completely integrable
flows on adjoint orbits.} 
\newblock Result Math. {\bf 40} (2001), 286-309.
\bibitem{Terng} {\sc C.L. Terng,  K. Uhlenbeck.}
\newblock {\em Schr\"odinger flows on Grassmannians, Integrable systems, geometry and topology, }\newblock  AMS/IP Stud. Adv.Math, AMS, Providence (2006), 235-256.


\end{thebibliography}
\end{document}